\documentclass[12pt]{amsart}
%\documentstyle[12pt,theorem,amscd]{amsart}
%\pagestyle{empty}
% NMD additions 
\usepackage{graphicx, overpic}
\usepackage[below]{placeins}
% end NMD additions 

\usepackage{amsmath,amscd,amssymb, epsfig,psfrag}

\setlength{\topmargin}{0.5cm}
\setlength{\oddsidemargin}{-0.2cm}
\setlength{\evensidemargin}{-0.2cm}
\textheight = 22cm  
\textwidth = 16.2cm

\newtheorem{theorem}{Theorem}[section]
\newtheorem{lemma}[theorem]{Lemma}

\newtheorem{cor}[theorem]{Corollary}
%{\theorembodyfont{\rmfamily}
\newtheorem{Def}[theorem]{Definition}

\newtheorem{conjecture}[theorem]{Conjecture}

\newcommand{\ZZ}{\mathbb{Z}}
\newcommand{\NN}{\mathbb{N}}
\newcommand{\QQ}{\mathbb{Q}}

\newcommand{\RR}{\mathbb{R}}

\newcommand{\bbslash}{\backslash\backslash}

\def\b{\beta}

\def\g{\gamma}

\def\G{\Gamma}

\def\S{\Sigma}

\def\PSL{\mbox{\rm{PSL}}}

\def\SL{\mbox{\rm{SL}}}

\def\min{\mbox{\text{min}}}

\def\HH{\mathbb{H}}

\def\interior{{\rm int}}

\def\min{\mbox{\rm{min}}}

\def\Int{\mbox{int}\ }

\def\HH{\mathbb{H}}

\def\split{\backslash\backslash}

% Macro to generate the # symbol in html specials
\edef\t@mp{\catcode`\noexpand\#=\the\catcode`\#}%
    \catcode`\#=12
    \def\h@sh{#}%
    \t@mp

% Macro to generate the ~ symbol in html specials
\edef\t@mp{\catcode`\noexpand\~=\the\catcode`\~}%
    \catcode`\~=12
    \def\tild@{~}%
    \t@mp

\begin{document}

\title{Criteria for virtual fibering} 
\keywords{%
        hyperbolic, 3-manifold, fiber} 
\subjclass{% 
        Primary 57M50; Secondary 57R22}  

\author[Ian Agol]{%
        Ian Agol} 
\address{%
      University of California, Berkeley \\ 970 Evans Hall \# 3840 \\ Berkeley, CA 94720-3840}
\email{% 
        ianagol@gmail.com}  

\thanks{Agol partially supported by NSF grant DMS-0504975, and the J. S. Guggenheim Foundation}

%\date{% 
%July 26, 2007} 

%%%
%%%
%%%

\begin{abstract} 
We prove that an irreducible 3-manifold whose fundamental
group satisfies a certain group-theoretic property called {\it RFRS} is virtually fibered. 
As a corollary, we show that 3-dimensional reflection orbifolds
and arithmetic hyperbolic orbifolds defined by a quadratic form
virtually fiber. These include the Seifert Weber dodecahedral
space and the Bianchi groups. Moreover, we show that a taut sutured compression
body has a finite-sheeted cover with a depth one taut-oriented
foliation.
\end{abstract} 

\maketitle
\section{Introduction}
Thurston proposed the question of whether a hyperbolic 3-manifold 
has a finite-sheeted cover fibering over $S^1$ \cite{Th:82}. 
A connected fibered manifold with fiber of Euler characteristic $<0$ must be irreducible. However, there exist non-fibered
closed irreducible 3-manifolds which have no cover which fibers over $S^1$,
the simplest example being a Seifert fibered space with non-zero Euler
class and base orbifold of non-zero Euler characteristic \cite{OrlikRaymond69}. 
Since Thurston posed this question, many 
classes of hyperbolic 3-manifolds have been proven to virtually fiber. 
Thurston proved that the
reflection group in a right-angled hyperbolic dodecahedron virtually
fibers. 

Gabai gave an example of a union of two $I$-bundles over non-orientable
surfaces, which has a 2-fold cover which fibers, but in some sense this 
example is too simple since it fibers over a mirrored interval (this example was known to Thurston). He also gave an example of a 2 component link which is not fibered
and has a 2-fold cover which fibers, which is not of the previous type of example \cite{Gabai86}. Aitchison and Rubinstein
generalized Thurston's example to certain cubulated 3-manifolds \cite{AR:99}. Reid gave
the first example of a rational homology sphere which virtually fibers using arithmetic methods \cite{Reid95}.
Leininger showed that every Dehn filling on one component of the Whitehead
link virtually fibers \cite{Leininger02}. Walsh showed that all of the non-trivial 2-bridge link complements
virtually fiber \cite{Walsh05}. There are more recent unpublished results of DeBlois and
Agol, Boyer, and Zhang exhibiting classes of Montesinos links which virtually fiber by generalizing Walsh's method. Most of these examples are shown to be virtually fibered by exhibiting a fairly explicit cover of the manifold
which fibers. Button found many virtually fibered manifolds
in the Snappea census of cusped and closed hyperbolic 3-manifolds \cite{Button05}. Of course, we have neglected mention of the
many papers exhibiting classes of 3-manifolds which fiber. 

A characterization of virtual fibering was given by Lackenby \cite{Lackenby04}. He showed
that if $M$ is a closed hyperbolic manifold, with a tower of regular covers such that the
Heegaard genus grows slower than the fourth root of the index of the cover, then
$M$ virtually fibers. It is difficult, however, to apply this criterion to a specific
example in order to prove it virtually fibers since it is difficult to estimate the Heegaard genus of covering spaces. 

In this paper, we give a new criterion for virtual fibering which is a condition on 
the fundamental group called RFRS. Using this condition, and recent results of 
Haglund-Wise \cite{HaglundWise07}, we prove that hyperbolic reflection groups
and arithmetic groups defined by quadratic forms virtually fiber. 
Rather than state the RFRS condition in the introduction, we
state the following theorem, which is a corollary of Theorem \ref{RFRSvfiber} and
Corollary \ref{RFRS}: 
\begin{theorem}
Let $M$ be a compact irreducible orientable 3-manifold with
$\chi(M)=0$. If $\pi_1(M)$ is a subgroup of a right-angled Coxeter
group or right-angled Artin group, then $M$ virtually fibers. 
\end{theorem}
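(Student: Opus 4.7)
The plan is to combine the two named results directly: Theorem \ref{RFRSvfiber} provides virtual fibering under the stated topological hypotheses once $\pi_1(M)$ is known to be (virtually) RFRS, and Corollary \ref{RFRS} is meant to supply exactly this group-theoretic conclusion for subgroups of right-angled Artin and Coxeter groups. Since the theorem is stated as a direct consequence of these two results, the whole argument collapses to the implication ``$\pi_1(M)$ is a subgroup of a RAAG or RACG $\Rightarrow$ $\pi_1(M)$ is (virtually) RFRS,'' followed by an invocation of the main fibering theorem.

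For the RAAG case I would first verify that a right-angled Artin group $A_\Gamma$ is itself RFRS, then observe that RFRS passes to subgroups simply by intersecting the given tower. An explicit tower for $A_\Gamma$ can be built by iterating the assignment $G \mapsto \ker(G \onto H_1(G;\ZZ)/\text{torsion})$, starting from $G_0 = A_\Gamma$. The intersection condition $\bigcap_i G_i = 1$ follows from residual torsion-free nilpotence of RAAGs, which is classical (via the lower central series, or via an embedding into the Magnus algebra), and the $\QQ$-abelianization condition holds by construction of the tower. For the RACG case I would pass through the RAAG case using \cite{HaglundWise07}: a right-angled Coxeter group $W_\Gamma$ acts specially on its Davis complex after restricting to the index-two orientation-preserving subgroup, and hence has a finite-index subgroup embedding in some RAAG. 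Given $\pi_1(M) \leq W_\Gamma$, intersecting with this RAAG-embeddable finite-index subgroup of $W_\Gamma$ yields a finite-index subgroup $\pi_1(\tilde M)$ which embeds in a RAAG. By the RAAG case, $\pi_1(\tilde M)$ is RFRS, so Theorem \ref{RFRSvfiber} gives virtual fibering of $\tilde M$, which immediately yields virtual fibering of $M$.

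The main difficulty sits in producing the RFRS tower for a RAAG. The requirement $\bigcap G_i = 1$ is strictly stronger than ordinary residual finiteness, because at each step one is forced to use the $\QQ$-abelianization kernel rather than a freely chosen finite-index subgroup; one must show that this restricted class of descending quotients still separates all nontrivial elements. The key input is residual torsion-free nilpotence of RAAGs, which guarantees that iterating the rational abelianization operation produces a tower with trivial intersection. Once this combinatorial heart is in place, the remaining steps --- passage to subgroups, the Haglund--Wise embedding for Coxeter groups, and the application of Theorem \ref{RFRSvfiber} --- are essentially bookkeeping, and the topological hypotheses on $M$ (irreducibility, orientability, $\chi(M)=0$) enter only as input to the main fibering theorem.
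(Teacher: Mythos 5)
Your overall strategy matches the paper's: the theorem is indeed a direct combination of Theorem \ref{RFRSvfiber} (RFRS implies virtual fibering) with Corollary \ref{RFRS} (RAAGs and RACGs are virtually RFRS). But you reverse the internal direction of the group-theoretic reduction. The paper's actual route is to prove the RFRS property directly for right-angled Coxeter groups (Theorem \ref{coxeterRFRS}, using a tower of 2-fold covers of the polyhedral orbifold obtained by reflecting in a face, and the reflection hyperplanes to detect infinite-order homology classes), and then deduce the RAAG case from the Davis--Januszkiewicz/Haglund--Wise fact that RAAGs virtually embed in RACGs. You go the other way: prove RAAGs are RFRS directly, then deduce the Coxeter case from the fact that RACGs virtually embed in RAAGs. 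Both embedding directions are available, so the overall architecture is not objectionable; what differs is which half is proved ``by hand.''

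However, your by-hand argument for RAAGs has a genuine gap. The tower you propose, $G_{i+1} = \ker\bigl(G_i \onto H_1(G_i;\ZZ)/\text{torsion}\bigr)$, is exactly the rational derived series $G_{i+1} = (G_i)^{(1)}_r$. For a nontrivial RAAG, $H_1(G_i;\ZZ)/\text{torsion}$ is infinite (already $H_1(A_\Gamma;\ZZ)\cong\ZZ^{|V\Gamma|}$), so each $G_{i+1}$ has \emph{infinite} index in $G_i$. The RFRS definition in this paper explicitly requires $[G:G_i]<\infty$, and the inequality $G_{i+1}\geq (G_i)^{(1)}_r$ is a containment, not an equality: one must choose, at each stage, a \emph{finite} abelian quotient of $H_1(G_i;\ZZ)/\text{torsion}$ and take its kernel. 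Residual torsion-free nilpotence (or the fact that $\bigcap_i G^{(i)}_r = 1$, which also requires citation for RAAGs) does not immediately yield that one can select such finite quotients so that the resulting finite-index tower still has trivial intersection; this is precisely the content one has to supply, and you flag the difficulty in your last paragraph but then assert it is resolved by the very construction that violates the finiteness hypothesis. By contrast, the paper's Coxeter-group route sidesteps the issue: $G/G^{(1)}\cong(\ZZ/2\ZZ)^{\rk G}$ is already finite, the 2-fold covers $D_i\to D_{i-1}$ are manifestly finite index, and the intersection-with-a-reflection-hyperplane argument shows the mod-2 quotient factors through $H_1/\text{torsion}$. To repair your version you would need either to carefully interleave finite quotients while preserving cofinality (which is a real argument, not bookkeeping), or simply fall back on the Coxeter reduction as the paper does.
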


The strategy of the proof is intuitive. If an oriented surface $F\subset M$ is non-separating, then 
$M$ fibers over $S^1$ with fiber $F$ if and only if $M\split F \cong I\times F$. 
If $F$ is not a fiber of a fibration, then there is a JSJ decomposition of $M\split F$ \cite{JS76,Johannson79},
which has a product part (window) and a non-product part (guts). The idea is to produce
a complexity of the guts, and use the RFRS condition to produce a cover
of $M$ to which a component of the guts lifts and for which we can decrease the complexity of the guts, by ``killing'' it using non-separating
surfaces coming from new homology in this cover. The natural setting for these ideas
is sutured manifold hierarchies \cite{Ga2}, but we choose instead to use least-weight taut normal 
surfaces introduced by Oertel \cite{Oertel86} and further developed by Tollefson and Wang \cite{TollefsonWang96}. The replacement
for sutured manifold hierarchies is branched surfaces, whose machinery was introduced by
Floyd and Oertel \cite{FO}. We review some of the results from these papers, but in order
to follow the proofs in this paper, one should be familiar with the results of \cite{Oertel86,TollefsonWang96}.

{\bf Acknowledgments:} We thank Alan Reid for encouraging us to try to prove that the
Bianchi groups virtually fiber, for sharing the preprint \cite{LongReid07}, and for letting us know of Theorem \ref{FD}.
We also thank Misha Kapovich for helpful comments and we thank Haglund and Wise for sharing their preprint
\cite{HaglundWise07}. We further thank the referee for helpful
comments, and Michelle McGuinness for proofreading.

\section{Residually finite rational solvable groups}

The rational derived series of a group $G$ is defined
inductively as follows. If $G^{(1)} = [G,G]$, then
$G^{(1)}_r = \{ x\in G |\exists k \neq 0, x^k\in G^{(1)} \}$.
If $G^{(n)}_r$ has been defined, define
$G^{(n+1)}_r =( G^{(n)}_r)^{(1)}_r$. The rational derived
series gets its name because $G^{(1)}_r = \ker \{ G \to \QQ\otimes_{\ZZ} G/G^{(1)} \}$. The quotients $G/G^{(n)}_r$ are solvable. 

\begin{Def}
A group $G$ is {\it residually finite $\QQ$-solvable} or {\it RFRS} 
if there
is a sequence of subgroups $G=G_0 > G_1 > G_2 > \cdots$ 
such that $G\rhd G_i$,  $\cap_i G_i =\{1\}$, $[G:G_{i}] < \infty$ and $G_{i+1}\geq (G_i)^{(1)}_r$.  
\end{Def} 
By induction, $G_i \geq G^{(i)}_r$, and thus $G/G_i$ is solvable with derived series of length at most
$i$. 
We remark that if $G$ is RFRS, then any subgroup
$H<G$ is as well.
Also, we remark that to check that $G$ is RFRS, we need only find a cofinal sequence of finite index subgroups $G> G_i$ such that 
$G_{i+1} > (G_i)^{(1)}_r$. This follows because $G\rhd Core(G_i )=\cap_{g\in G} g G_i g^{-1}$. 
Then the sequence $G\rhd Core(G_1)\rhd Core(G_2) \rhd \cdots$ will satisfy
the criterion by the following argument. 
For $g\in G$, $g ((G_i)^{(1)}_r) g^{-1}=(g G_i g^{-1})^{(1)}_r$,
so
$$Core(G_{i+1}) = \cap_{g\in G} g G_{i+1} g^{-1}\geq \cap_{g\in  G} (g G_i g^{-1})^{(1)}_r \geq (Core(G_i))^{(1)}_r.$$ 
Thus, the new sequence satisfies the properties of the RFRS criterion. 
The relevance of this definition may not be immediately apparent, but it
was discovered by the following method. Suppose we have a 
manifold (or more generally a topological space) $M$ with
$\pi_1 M=G$ such that
$rk H_1(M;\QQ)> 0$. Choose a non-trivial finite quotient 
$H_1(M;\ZZ)/Torsion \to H_0$, and take the cover $M_1$ of $M$ 
corresponding to $ker\{\pi_1(M) \to H_0\}$. Suppose that $rk H_1(M_1;\QQ)>rk H_1(M;\QQ)$.
Then we may take a finite index abelian cover $M_2 \to M_1$ coming from
a quotient of $H_1(M_1;\ZZ)/Torsion$ which does not factor through
$H_1(M_1)\to H_1(M)$. If we can repeat this process to get a 
sequence of covers $M\leftarrow M_1 \leftarrow M_2 \leftarrow \cdots$
such that $M_{i+1}$ is a finite abelian cover of $M_i$, and the sequence
of covers is cofinal (so that the limiting covering space is the universal
cover), then $\pi_1(M)$ is RFRS. Notice
that we may assume that $rk H_1(M_i;\QQ)$ is unbounded,
unless $\pi_1(M)$ is virtually abelian. Thus, we see
that a manifold $M$ such that $\pi_1(M)$ is not virtually abelian but
is RFRS has
virtual infinite first betti number.
This formulation of the RFRS condition is the
characterization that we will apply to study 3-manifolds. 

\begin{theorem} \label{coxeterRFRS}
A finitely generated right-angled Coxeter group $G$ has a finite index
subgroup $G'$ such that $G'$ is RFRS. 
\end{theorem}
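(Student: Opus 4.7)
The plan is to reduce the theorem to the analogous fact for right-angled Artin groups (RAAGs) via the Haglund--Wise embedding theorem, and then to verify the RFRS condition for RAAGs directly; the reduction exploits the hereditary property of RFRS recorded immediately after the definition.

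For the reduction, I would first locate a torsion-free finite-index subgroup of $G$ that embeds into a RAAG. The natural starting candidate is $G_0 := \ker(G \to (\ZZ/2)^V)$, where $V$ is the vertex set of the defining graph. Every finite subgroup of a RACG is generated by commuting reflections and surjects nontrivially onto $(\ZZ/2)^V$, so $G_0$ is torsion-free and acts freely on the Davis complex $\Sigma$. The natural $2$-coloring of hyperplanes is $G_0$-invariant, so $\Sigma/G_0$ is a compact non-positively curved cube complex with two-sided hyperplanes. After replacing $G_0$ by a further finite-index subgroup $G'$ to eliminate any self- or inter-osculations of hyperplanes, the Haglund--Wise theorem \cite{HaglundWise07} supplies an injection $G' \hookrightarrow A_\Delta$ into a RAAG.

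Next I would show that every RAAG $A = A_\Delta$ is RFRS. The essential input is the Magnus expansion of Duchamp--Krob: $A$ is residually torsion-free nilpotent, and the lower central quotients $\gamma_n A / \gamma_{n+1} A$ are torsion-free. These properties force the rational derived series of $A$ to coincide with its ordinary derived series, and since $A^{(n)} \subseteq \gamma_{2^n} A$ and $\bigcap_n \gamma_n A = \{1\}$ we get $\bigcap_n A^{(n)}_r = \{1\}$. To upgrade to an actual RFRS sequence of \emph{finite-index} subgroups, I would set $A_0 = A$ and inductively let $A_{i+1}$ be the preimage in $A_i$ of a chosen proper finite-index subgroup of $A_i^{\mathrm{ab}}/\text{torsion}$; this automatically yields $A_{i+1} \supseteq (A_i)^{(1)}_r$. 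Cofinality is arranged by interleaving with finite quotients of the torsion-free nilpotent groups $A/\gamma_n A$ (residually finite by Mal'cev), and the core trick after the definition upgrades finite index to normal finite index. Since RFRS is inherited by subgroups, $G' \hookrightarrow A_\Delta$ is RFRS, proving the theorem.

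The hard part is the verification for RAAGs. Finite-index subgroups of a RAAG are not themselves RAAGs, and their abelianizations can contain torsion, so we must work with $A_i^{\mathrm{ab}}/\text{torsion}$ at each stage. The property that keeps the induction going---ensuring that the derived-series containment $A_{i+1} \supseteq (A_i)^{(1)}_r$ can always be achieved by a proper finite-index choice and that the resulting chain is cofinal---is the residual torsion-free nilpotence of $A$, a property inherited by every subgroup $A_i$.
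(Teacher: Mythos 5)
Your proposal takes a genuinely different route from the paper: you reduce via Haglund--Wise to the statement that RAAGs are RFRS, whereas the paper proves RFRS directly for the RACG by a geometric argument (realizing $G$ on the Tits cone and building a cofinal tower of 2-fold covers $D_{i+1}\to D_i$ by reflecting in faces; the key point is that a loop crossing the preimage of $F_i$ an odd number of times meets an embedded two-sided orientable codimension-one submanifold, so its square is rationally nontrivial in $H_1$). The paper then deduces RFRS for RAAGs as a corollary by embedding them in RACGs --- the opposite direction of your reduction. Your reduction step itself (a RACG is virtually special, hence a finite-index subgroup embeds in a RAAG, and RFRS passes to subgroups) is sound.

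The gap is in your direct argument that RAAGs are RFRS. The load-bearing claim in your last paragraph --- that residual torsion-free nilpotence of $A$, inherited by each $A_i$, ``ensures ... that the resulting chain is cofinal'' --- is false. Residual torsion-free nilpotence does not imply RFRS, and even being torsion-free nilpotent does not. Take $G=H_3(\ZZ)=\langle a,b,c\mid [a,b]=c,\ [a,c]=[b,c]=1\rangle$. Here $G^{\mathrm{ab}}=\ZZ^2$ is torsion-free, so $G^{(1)}_r=\langle c\rangle$, and any $G_1$ in a putative RFRS chain must contain $\langle c\rangle$. But for \emph{any} finite-index $G_1\supseteq\langle c\rangle$, the image of $G_1$ in $G/\langle c\rangle\cong\ZZ^2$ is a sublattice of index $N=[G:G_1]$, and one computes $\gamma_2 G_1=\langle c^N\rangle$, so $G_1^{\mathrm{ab}}\cong\ZZ^2\oplus\ZZ/N$ with torsion exactly $\langle c\rangle/\langle c^N\rangle$. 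Hence $(G_1)^{(1)}_r=\langle c\rangle$ again, and inductively every $G_i\supseteq\langle c\rangle$, so $\bigcap_i G_i\neq 1$. (Consistently, the paper remarks that Nil manifolds are virtually fibered but not virtually RFRS.) Your proposed fix --- ``interleaving with finite quotients of the torsion-free nilpotent groups $A/\gamma_n A$'' --- cannot repair this: once an element is trapped inside $(A_i)^{(1)}_r$ at every stage, no further finite abelian quotient of $A_i^{\mathrm{ab}}/\mathrm{torsion}$ can detect it, regardless of what nilpotent quotients of $A$ exist. The intermediate assertion that $A^{(n)}_r=A^{(n)}$ for all $n$ is likewise unjustified (it would require torsion-freeness of $(A^{(k)})^{\mathrm{ab}}$, and $A^{(k)}$ is not a RAAG), though that is a secondary point. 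The real obstruction is that torsion appears in the abelianizations of the finite-index subgroups $A_i$, and nothing in residual nilpotence prevents the chain from getting stuck on that torsion --- which is precisely the phenomenon the paper's geometric doubling argument sidesteps by exhibiting, at each stage, an explicit orientable hypersurface dual to a surviving rational homology class.
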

\begin{proof}
Let $G$ act on a convex subset $C\subset\RR^n$ with quotient a polyhedral orbifold $D=C/G$, 
such that each involution generator of $G$ fixes a codimension one hyperplane in $C$. 
This is possible by taking $C$ to be homeomorphic to the Tits cone for the
group $G$ \cite[V.4]{Bourbaki68}, \cite[5.13]{Humphreys90}. 
We may take a subgroup $G'<G$ of finite index inducing a manifold cover $D'\to D$, such that for each codimension one
face $F$ of $D$, the preimage of $F$ in $D'$ is embedded, orientable and
2-sided. For example, we may take $G'= \ker\{ G \to G/G^{(1)}\}$,
since $G/G^{(1)} \cong (\ZZ/2\ZZ)^{rk G}$. 
Consider a cofinal sequence of finite-index covers $D\leftarrow D_1 \leftarrow D_2 \leftarrow \cdots$
such that $D_{i+1}$ is a 2-fold cover of $D_i$ obtained by reflecting in a face
$F_i\subset D_i$ (see Figure \ref{coxeter}). Let $G_i=\pi_1(D_i)\cap G'$, with quotient manifold $D_i'=C/G_i$. 
We need to show that $(G_i)^{(1)}_r < G_{i+1}$. Clearly, $(G_i)^{(1)}< G_{i+1}$, since
$G_i/G_{i+1}\cong \ZZ/2\ZZ$ or $G_{i+1}=G_i$. Thus, we need only show that
any element mapping non-trivially to $G_i/G_{i+1}$ is not torsion, so that
we have a factorization $G_i \to H_1(G_i)/Torsion \to \ZZ/2\ZZ$. 
Suppose $g\in G_i - G_{i+1}$. Take a loop $\gamma$ representing $g$ in $D'_i$,
and project $\gamma$ down to $D_i$ generically. We see a closed path bouncing 
off the faces of $D_i$ (see Figure \ref{coxeter}). 
Then the projection of  $\gamma$ must hit $F_i$ an odd number of times, otherwise it would lift to $D_{i+1}$
and therefore would be contained in $G_{i+1}$. 
We therefore see $\gamma$ intersecting the preimage of $F_i$ in $D'_i$ an odd number of times.
Since the preimage of $F_i$ is an orientable embedded 2-sided codimension 1 submanifold, we see that $\gamma$
must represent an element of infinite order in $H_1(G_i)=G_i/(G_i)^{(1)}$, and 
therefore $0\neq 2[\gamma] \in H_1(D_i';\QQ)=\QQ\otimes G_i/(G_i)^{(1)}_r$.
This implies that $g^2\notin (G_i)^{(1)}_r$. But $g^2\in G_{i+1}$, since $[G_i:G_{i+1}]\leq 2$.
Therefore, we have $G_i/G_{i+1}\cong \ZZ/2\ZZ$ is a quotient of $H_1(G_i;\ZZ)/Torsion$.
Thus, we see that $G'$ is RFRS. 
\end{proof}

\begin{figure}[htb] 
	\begin{center}
	\psfrag{a}{$D$}
	\psfrag{b}{$D_1$}
	\psfrag{c}{$D_2$}
	\psfrag{d}{$\gamma$}
	\psfrag{e}{$\gamma^2$}
	\psfrag{f}{$D_3$}
	\psfrag{g}{$\cdots$}
	\psfrag{h}{$F_1$}
	\psfrag{i}{$F_0$}
	\epsfig{file=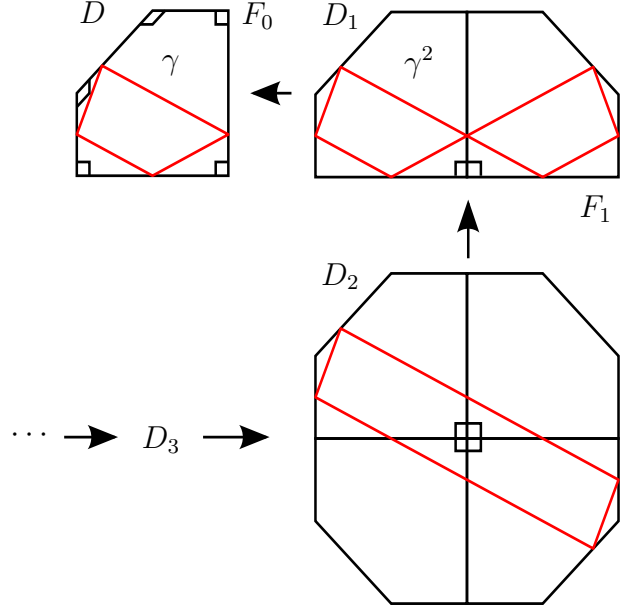, width=.5\textwidth, height=.5\textwidth}
	\caption{\label{coxeter} A right-angled polyhedral orbifold $D$ and a tower of 2-covers}
	\end{center}
\end{figure}

\begin{cor} \label{RFRS}
The following groups are virtually RFRS:
\begin{itemize}
\item surface  groups
\item reflection groups
\item right-angled Artin groups
\item arithmetic hyperbolic groups defined by a quadratic form 
\item direct products and free products of RFRS groups 
\end{itemize}
\end{cor}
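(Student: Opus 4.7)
The overarching strategy is to reduce each of items (1)--(4) to Theorem \ref{coxeterRFRS} by exhibiting each class as a (virtual) subgroup of a right-angled Coxeter group, after which we are done by the remark in the preceding section that RFRS is inherited by subgroups (and hence so is the property of being virtually RFRS). The final item, closure under direct and free products, is handled separately by a direct construction of RFRS sequences on the product from sequences on the factors.

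I would invoke the following embeddings to handle (1)--(4). For item (3), I would use the Davis-Januszkiewicz ``mock reflection'' construction, which realizes every right-angled Artin group as a subgroup of a right-angled Coxeter group. For item (1), closed surface groups embed in right-angled Artin groups (e.g.\ by Crisp-Wiest, or directly as finite-index subgroups of the reflection group on a right-angled hyperbolic $4g$-gon for $g \geq 2$), so (1) reduces to (3). For item (2), I would cite the theorem of Haglund-Wise \cite{HaglundWise07}: every finitely generated Coxeter group is virtually special, hence has a finite-index subgroup embedding in a RAAG; combining with (3) shows that reflection groups are virtually RFRS. For item (4), I would apply Theorem \ref{FD} (the Long-Reid result mentioned in the acknowledgments, \cite{LongReid07}), which places any arithmetic quadratic-form hyperbolic lattice as a subgroup of a hyperbolic reflection group up to commensurability, thereby reducing (4) to (2).

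Closure under direct products is a routine calculation from the identity $(A \times B)^{(1)}_r = A^{(1)}_r \times B^{(1)}_r$: given RFRS sequences $\{G_i\}$ for $G$ and $\{H_i\}$ for $H$, the factorwise sequence $\{G_i \times H_i\}$ witnesses RFRS for $G \times H$. The free product case is the main obstacle, since there is no obvious embedding of an arbitrary free product of RFRS groups into a RACG. My plan there is to use Bass-Serre theory together with the Kurosh subgroup theorem: any finite-index subgroup $K < G * H$ acts on the Bass-Serre tree of the splitting with trivial edge stabilizers, and is therefore itself a free product of a free group with finitely many conjugates of finite-index subgroups of $G$ and $H$. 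Using the RFRS sequences $\{G_i\}$, $\{H_i\}$ together with the fact that free groups are RFRS (as subgroups of any RACG containing a free subgroup), I would inductively construct a cofinal tower of finite-index subgroups $K_i < G * H$ whose Kurosh factors are the $G_i$'s, $H_i$'s, and free groups, verifying the rational derived containment at each level from the multiplicativity $(G * H)^{ab} = G^{ab} \times H^{ab}$ of abelianization under free products.
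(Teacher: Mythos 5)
Your strategy matches the paper's for items (1)--(3) and for direct and free products: reduce (1)--(3) to Theorem \ref{coxeterRFRS} via virtual embeddings into right-angled Coxeter groups (the paper cites \cite{S} for the pentagon and \cite{HaglundWise07} for (2), (3); you route (1) through RAAGs and (3) through Davis--Januszkiewicz, but either chain works), and handle products by direct constructions on the RFRS filtrations.

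For item (4), however, your reduction is circular. Theorem \ref{FD} is not a reflection-group embedding statement; it asserts the FD property of these lattices and is derived \emph{later} from Theorem \ref{vquadfiber}, which itself depends on Corollary \ref{RFRS}. Nor does \cite{LongReid07} supply the commensurability you want --- that paper concerns multiplying fibered faces. The result you actually need, that arithmetic lattices of simplest type in $\Isom(\HH^n)$ have finite-index subgroups embedding in right-angled Coxeter groups, is the same Haglund--Wise statement \cite[p.~3]{HaglundWise07} already cited for (2) and (3) (with \cite{ALR} handling the cusped $3$-manifold case earlier); once the citation is corrected, (4) runs exactly like (2) and (3). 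A secondary gap lies in the free product case: your sketch does not explain why the tower $K_i$ can be chosen cofinal, i.e.\ why $\cap_i K_i = \{1\}$. The paper establishes this by a cyclically reduced word argument on the Bass--Serre tree --- once $i$ is large enough that the letters of a cyclically reduced expression for a nontrivial $f \in G \ast H$ all escape $G_i$, $H_i$, the image of $f$ in the quotient graph $T/K_i$ traverses a distinct edge at each vertex, hence is a homotopically nontrivial loop, so $f$ survives into the free factor $\ZZ^{\ast l(i)}$ and is eliminated there by $2$-group quotients. Without this step the Kurosh-tower construction is not known to terminate.
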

\begin{proof}
Surface groups are subgroups of the reflection group 
in a right-angled pentagon (see e.g. \cite{S}). Reflection groups, right-angled Artin groups, and
arithmetic hyperbolic groups defined by a quadratic form all have finite index subgroups which are
subgroups of right-angled reflection groups by  \cite[p. 3]{HaglundWise07} (this was
proven for cusped arithmetic 3-manifolds previously in \cite{ALR}), and are thus virtually RFRS by Theorem \ref{coxeterRFRS}.

Direct products of RFRS groups are 
immediately seen to be RFRS. 
If $G, H$ are RFRS, then we have $G_i\lhd G$, $[G:G_i]<\infty$, such that
$(G_i)^{(1)}_r < G_{i+1}$, and $H_i\lhd H$, $[H:H_i]<\infty$, such that $(H_i)^{(1)}_r<H_{i+1}$.
Then $G_i\times H_i \lhd G\times H$ is finite index, and $(G_i\times H_i)^{(1)}_r =(G_i)^{(1)}_r\times (H_i)^{(1)}_r \lhd G_{i+1}\times H_{i+1}$.

Free products of RFRS groups are also
easily seen to be RFRS. Let $G$ and $H$
be (non-trivial) RFRS as above, with $G=G_0\rhd G_1\rhd G_2\rhd  \cdots$,
$H=H_0\rhd H_1\rhd H_2\rhd \cdots$. We construct 
inductively groups $GH_i \lhd G\ast H$ such that $GH_i \cong (H_i)^{\ast k(i)} \ast (G_i)^{\ast j(i)}\ast \ZZ^{\ast l(i)}$.
To obtain $GH_{i+1}$ from $GH_i$, take the kernel of the quotient of $GH_i$
onto $ (H_i/H_{i+1})^{k(i)}\times (G_i/G_{i+1})^{j(i)}\times (\ZZ/2\ZZ)^{l(i)}$. This may 
be easily seen by induction of the action on the Bass-Serre tree. The free product
$G\ast H$ is a graph of groups with graph a single edge with trivial stabilizer,
and two vertices with stabilizers $G$ and $H$, respectively. The Bass-Serre
tree $T$ is infinite bipartite. When we take the quotient $T/GH_i$, we get a graph
of groups with bipartite graph, and vertices corresponding to conjugates of $G_i$ and $H_i$,
which is how we see the claimed Kurosh decomposition, where the rank of
the free factor $l(i)$ corresponds to the first betti number of the graph $T/GH_i$. For any element 
$f\in G\ast H$, we may write $f$ (up to conjugacy) as a cyclically reduced 
product $f=  g_1h_1 g_2 h_2 \cdots g_m h_m$, where $g_j\neq 1, h_j\neq 1$, 
unless $f$ is conjugate into $G$ or $H$, in which case it is easily seen that $f\notin GH_i$ for
some $i$.  
Choose $i$ such that $g_j \notin G_i, h_j \notin H_i$, $j=1, \ldots, m$. 
Then the projection of a loop representing $f$ to the graph $T/GH_i$
will be homotopically non-trivial, since each time it enters a vertex
of $T/GH_i$, it will exit a distinct edge, since $g_j$ and $h_j$ are
not in the vertex stabilizers of $T/GH_i$. Thus, the projection of 
$f$ to $\ZZ^{\ast l(i)}$ will be non-trivial, and since the free group is 
RFRS (with respect to 2-group quotients),
we see that $f$ will eventually not be in $GH_i$. 
\end{proof}

\section{Sutured manifolds}
We consider 3-manifolds that are triangulated and orientable, and PL
curves and surfaces.
For a proper embedding $Y\subset X$, define $X\bbslash Y$
to be the manifold $\overline{X\backslash \mathcal{N}(Y)}$,
where $\mathcal{N}(Y)$ denotes a regular neighborhood of $Y$. 

\begin{Def}
Let $S$ be a compact connected orientable surface. Define
$\chi_{-}(S)=\max\{-\chi(S), 0\}$. For a disconnected compact orientable surface
$S$, let $\chi_-(S)$ be the sum of $\chi_-$ evaluated on the connected subsurfaces of $S$.
\end{Def}
In \cite{Th3}, Thurston defines a pseudonorm on $H_2(M,\partial M;\RR)$
and $H_2(M;\RR)$, and this was generalized by Gabai \cite{Ga2}.

\begin{Def}
Let $M$ be a compact oriented 3-manifold. Let $K$ be a subsurface
of $\partial M$. Let $z\in H_2(M,K)$. Define the {\rm norm of z}
to be
$$x(z)=\min\{\chi_-(S)\ |\ (S,\partial S)\subset (M,K),\ \text{and} \ [S]=z\in H_2(M,K)\}.$$
\end{Def}
Thurston proved that $x(nz)=n x(z)$, and therefore $x$ may
be extended to a norm $x:H_2(M,K;\QQ)\to \QQ$. Then $x$
is extended to $x:H_2(M,K;\RR)\to \RR$ by continuity.

We have the Poincar\'e duality isomorphisms 
$PD: H^1(M)\to H_2(M,\partial M)$ and $PD:H_2(M,\partial M)\to H^1(M)$, so
that $PD^2=Id$. 
We may define for $\alpha \in H^1(M)$, $x(\alpha)= x(PD(\alpha))$.
We say that $\alpha$ is a fibered cohomology class if $c\cdot PD(\alpha)$
is represented by a fiber for some constant $c$. Denote by $\mathcal{B}_x(M)$ the unit ball of the
Thurston norm on $H^1(M)$ or on $H_2(M,\partial M)$. 

\begin{Def}
Let $S$ be a properly embedded oriented surface in the compact
oriented 3-manifold $M$. Then $S$ is {\it taut} in
$H_2(M,K)$ if $\partial S\subset K$, $S$ is incompressible,
there is no homologically trivial union of components of $S$, and
$\chi_-(S) =x([S])\ \text{for}\ [S]\in H_2(M,K)$.
\end{Def}

\begin{Def}
A {\it taut (codimension 1) foliation} is a foliation which has a closed transversal
going through every leaf.
\end{Def}

The following theorem says that a compact leaf of a taut oriented
foliation is a taut surface. This was proven by Thurston,
under the assumption that the foliation is differentiable \cite{Th3}.
Gabai generalized this to deal with general foliations \cite{Ga4}.

\begin{theorem}
Let $M$ be a compact oriented 3-manifold. Let $F$ be a 
taut oriented foliation  of $M$ such
that $F$ is transverse to $\partial M$. If $R$ is a compact leaf of $F$, then
$R$ is taut.
\end{theorem}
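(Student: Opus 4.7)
The plan is to verify the four conditions in the definition of a taut surface for $R$ viewed as a class in $H_2(M,\partial M)$: (i) $\partial R\subset\partial M$, (ii) $R$ is incompressible, (iii) no union of components of $R$ is null-homologous, and (iv) $\chi_-(R)=x([R])$. Condition (i) is immediate since $F$ is transverse to $\partial M$, so $\partial R = R\cap\partial M$. For (iii) I would use the transverse orientation of $F$ together with tautness: if a subunion $R'\subseteq R$ bounded a 3-chain in $M$, every closed transversal of $F$ would meet $R'$ with algebraic intersection zero, yet the transverse orientation forces all intersections of a transversal with any leaf to have the same sign, so no closed transversal could cross a leaf of $R'$, contradicting the taut hypothesis. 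Condition (ii) is Novikov's classical theorem that the leaves of a Reebless foliation are incompressible, combined with the observation that any taut foliation is Reebless (a Reeb component admits no closed transversal through its toral boundary leaf); in the $C^0$/PL category needed here this is handled by Gabai's extensions via essential laminations or branched surfaces carrying $F$.

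The heart of the argument is (iv). Thurston's smooth proof uses the Euler class $e(TF)\in H^2(M,\partial M;\ZZ)$ of the tangent plane field of $F$: since $TF|_R = TR$, one has $\langle e(TF),[R]\rangle = \chi(R)$, and for any properly embedded oriented surface $S$ with $[S]=[R]$ put in general position with $F$, the induced singular foliation on $S$ has only center and saddle tangencies. A Poincar\'e--Hopf analysis, in which center tangencies are eliminated using tautness (a center would bound a disk in a leaf, contradicting incompressibility of leaves), yields the Thurston inequality $|\langle e(TF),[S]\rangle|\leq \chi_-(S)$. Combining, $\chi_-(R) = -\chi(R) = -\langle e(TF),[S]\rangle \leq \chi_-(S)$ whenever $\chi(R)\leq 0$; the remaining case $\chi(R)>0$ is forced into Reeb stability, so that $M$ is an $S^2$-bundle, $\chi_-(R)=0$, and the inequality is trivial.

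The main obstacle is that $F$ is not assumed smooth, so neither $e(TF)$ nor a generic tangency analysis is directly available; this is precisely the gap that Gabai fills in \cite{Ga4}. I would proceed by replacing the Euler class argument either by Sullivan's closed, transversely positive 2-form (which exists for any taut $C^0$ foliation and plays the role of $e(TF)$ under integration over representatives of $[R]$) or by a direct sutured manifold hierarchy argument, decomposing $M\bbslash R$ along taut surfaces shared with $S$ via branched-surface isotopy and tracking complexity. Either route recovers the norm inequality and, together with the first paragraph, gives the four conditions defining tautness of $R$.
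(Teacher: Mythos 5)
The paper itself does not prove this statement; it records it as a known result, attributing the differentiable case to Thurston \cite{Th3} and the general ($C^0$) case to Gabai \cite{Ga4}. So there is no in-paper proof to compare against, only those citations. Your first two paragraphs are a faithful outline of Thurston's smooth argument: (i) transversality to $\partial M$ gives $\partial R\subset\partial M$; (iii) non-null-homology follows because a closed transversal through $R$, guaranteed by tautness, meets $R$ with intersections of a single sign, so $[R]\neq 0$; (ii) Novikov plus the observation that taut implies Reebless gives $\pi_1$-injectivity of leaves; and (iv) the Euler class $e(TF)$ evaluated on a competing surface in general position, together with Poincar\'e--Hopf and the elimination of centers, yields $|\langle e(TF),[S]\rangle|\le\chi_-(S)$, hence $\chi_-(R)\le\chi_-(S)$ once $\chi(R)\le 0$, with the $\chi(R)>0$ case trivial since $\chi_-(R)=0\le x([R])\le\chi_-(R)$.

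The place where your sketch is thinner than it sounds is the third paragraph. For a taut foliation that is only $C^0$, $TF$ is not a genuine plane bundle, so there is no $e(TF)$ and no generic tangency analysis, and Sullivan's closed transversely positive $2$-form is a smooth theorem --- it is not available as a black box in the $C^0$ setting. Producing a usable substitute is precisely the content of Gabai's paper: he builds a combinatorial volume-preserving flow on a branched surface carrying the foliation, and that replaces both the Sullivan form and the Euler class pairing. So "Sullivan's form exists for any taut $C^0$ foliation" is the statement that needs proof, not an ingredient one may invoke. Your alternative suggestion (a sutured hierarchy in $M\bbslash R$) is also Gabai-flavored but would need $R$ to appear as a decomposing surface compatible with the hierarchy at every stage, which is not automatic. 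In short: your outline of the smooth case is correct and matches the cited source; your treatment of the $C^0$ case correctly names the gap but does not close it, and closing it is exactly what \cite{Ga4} does.
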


If $M$ is a 3-manifold fibering over $S^1$ with fiber $F$,
then there is a cone over a face $E$ of $\mathcal{B}_x(M)$ whose interior contains $[F]$ 
and such that every other rational homology class in $\interior E$ is also
a fibered class \cite{Th3}.

Gabai introduced the notion of a sutured manifold in order to prove
the converse of the above theorem. 
\begin{Def}
A sutured manifold $(M,\gamma)$ is a compact oriented 3-manifold
$M$ together with a set $\gamma\subset \partial M$ of pairwise
disjoint annuli $A(\gamma)$ and tori $T(\gamma)$. Furthermore,
the interior of each component of  $A(\gamma)$ contains a
{\it suture}, {\it i.e.}, a homologically
nontrivial oriented simple closed curve. We denote the set
of sutures by $s(\gamma)$. Finally, every component of $R(\gamma)=
\partial M \bbslash A(\gamma)$ is oriented. Define $R_+(\gamma)$
(or $R_-(\gamma)$) to be those components of $R(\gamma)$ whose
normal vectors point out of (into) $M$. The orientations on $R(\gamma)$
must be coherent with respect to $s(\gamma)$, {\it i.e.}, if
$\delta$ is a component of $\partial R(\gamma)$, and is given the boundary
orientation, then $\delta$ must represent the same homology
class in $H_1(\gamma)$ as the suture in the component of $A(\gamma)$ containing $\delta$.
\end{Def}

\begin{theorem} \label{vd1fiber}
Let $M$ be a 3-manifold, and let $f\in H_2(M,\partial M)$ be a homology 
class which lies in the boundary of the cone over a fibered face $E$ of $\mathcal{B}_x(M)$.
Let $(F,\partial F)\subset (M,\partial M)$ be a taut surface representing
$f$. Then $M\split F$ is a taut sutured manifold which admits a depth
one taut oriented foliation. 
\end{theorem}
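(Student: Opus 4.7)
The strategy is to approximate $f$ by fibered classes in the interior of the cone over $E$ and spin the corresponding fibers around $F$ to assemble a depth-one taut foliation of $M\split F$. The proof has two tasks: verifying that $M\split F$ with its natural sutured structure is a taut sutured manifold, and constructing the depth-one oriented foliation.

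For the first task, I would equip $M\split F$ with its standard sutured structure: $A(\gamma)$ is a small annular collar of $\partial F$ inside $\partial(M\split F)$, while $R_+(\gamma)$ consists of the copy $F_+$ together with the outward-normal components of $\partial M\split\partial F$, and $R_-(\gamma)$ is defined symmetrically. Tautness then amounts to (a) irreducibility of $M\split F$, (b) incompressibility of $R_\pm(\gamma)$, and (c) Thurston-norm minimality of $R_\pm(\gamma)$ in their relative homology classes. All three follow from the tautness of $F$ in $M$: (a) holds because $M$ is irreducible and $F$ incompressible; (b) because a compressing disk for $F_\pm$ in $M\split F$ would reglue to a compressing disk for $F$ in $M$; and (c) because $\chi_-(F)=x(f)$ already forces the cut-open pieces to be norm-minimizing on either side.

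For the depth-one foliation I would fix an integer class $g$ in the interior of the cone over $E$. Since $E$ is a fibered face, $g$ is represented by a fibration $\phi_g\colon M\to S^1$ with fiber $G$, and $\omega_g:=\phi_g^*(d\theta)$ is a nowhere-vanishing closed $1$-form. Because $g$ is interior to the fibered cone and $f$ lies on its boundary, every positive combination $g+nf$ with $n\in\ZZ_{\ge1}$ is again interior, hence fibered, with some fiber $G_n$. Heuristically $G_n$ is obtained from $G$ by grafting on roughly $n$ nearly-parallel copies of $F$; after rescaling, the $G_n$ converge to a measured foliation whose unique compact leaf is $F$ and whose other leaves spiral onto $F$. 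Concretely I would perform the spinning construction in $M\split F$: isotope $G$ to meet $F$ transversely, cut $G$ along $G\cap F$, and spiral each resulting piece into $F_+$ and $F_-$. The spun pieces together with $F_\pm$ form an oriented codimension-one foliation $\mathcal{F}$ of $M\split F$, with $F_\pm$ as the depth-zero leaves and the spiraling pieces as depth-one leaves; the transverse orientation of $\omega_g$ gives the orientation of $\mathcal{F}$. Tautness of $\mathcal{F}$ then follows immediately: since each leaf is assembled from pieces of fibers of $\phi_g$, the form $\omega_g$ is nowhere tangent to $\mathcal{F}$, and its dual vector field provides closed transversals meeting every leaf.

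The main obstacle is the spinning step itself. One must show that the reglued pieces assemble into a genuine $C^0$ foliation of $M\split F$ rather than a lamination with gaps, and that no spurious compact leaves appear in the interior of $M\split F$. This is essentially the Cantwell-Conlon correspondence between boundaries of fibered faces and depth-one foliations; I would invoke or adapt their construction, with the existence of the fibered classes $g+nf$ for all $n\ge1$ providing the input that drives the spinning and pins down $F$ as the unique interior depth-zero leaf.
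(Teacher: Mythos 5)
Your overall strategy---spin a fiber $G$ of an interior fibered class about $F$ to produce a depth-one foliation of $M\split F$---is the same as the paper's, but you have left open precisely the step that carries the mathematical content, which you yourself flag at the end: showing the spun pieces assemble into a genuine foliation with no spurious compact leaves or non-product gaps. Proposing to ``invoke or adapt Cantwell--Conlon'' does not discharge this; the paper resolves it concretely, and the resolution is the heart of the proof. Since $[F]$ lies on the boundary and $[G]$ in the interior of the closed cone over the fibered face $E$, the Thurston norm is additive there: $x([G]+[F])=x([G])+x([F])$. After making $G$ and $F$ transverse so that the oriented cut-and-paste satisfies $\chi_-(G\oplus F)=\chi_-(G)+\chi_-(F)$, the surface $F\cap(M\split G)$ gives a \emph{taut} sutured-manifold decomposition of $M\split G$ in Gabai's sense. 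But $M\split G$ is a product sutured manifold (as $G$ is a fiber), and a taut sutured decomposition of a product is again a product; hence $M\split(G\cup F)$ is a product sutured manifold with sutures along $G\cap F$. This product structure is exactly what licenses the spinning: the infinite leaf $L$ built from $G-(G\cap F)$ spiraling onto $\{\pm1/n\}\times F$ satisfies $(M\split F)\split L\cong I\times L$, because it is obtained from the product $M\split(G\cup F)$ by attaching countably many product pieces $I\times\bigl(F\split(G\cap F)\bigr)$ along annuli. Without the product conclusion for $M\split(G\cup F)$, nothing rules out a non-product complementary region for the spun lamination, which is the very obstacle you acknowledge but do not close.

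A secondary point: your item (c) for tautness of $M\split F$ is too quick. Norm-minimality of $R_\pm(\gamma)$ in $H_2(M\split F,\gamma)$ does not follow merely by cutting the equality $\chi_-(F)=x([F])$; that the complement of a taut surface is a taut sutured manifold is a substantive theorem of Gabai. Alternatively, once the depth-one taut oriented foliation is constructed with $R(\gamma)$ as compact leaves, tautness of $(M\split F,\gamma)$ follows from the theorem (quoted in the paper) that compact leaves of taut oriented foliations are taut surfaces---which is implicitly how the paper handles this.
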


\begin{proof}
Let $G$ be a surface such that $[G]\in \interior E$ is a surface fibering
over $S^1$. Since $x([G]+[F])=x([G])+x([F])$, we may make $G$ and
$F$ transverse so that $\chi_-(G\oplus F)=\chi_-(G)+\chi_-(F)$, where $G\oplus F$ indicates
oriented cut and paste. Then $F\cap (M\split G)$ gives a taut sutured
manifold decomposition of $M\split G$ \cite{Ga2}. Since $G$ is a fibered surface,
$M\split G$ is a product. Any taut sutured decomposition of a product
sutured manifold is also a product sutured manifold, so $M\split (G\cup F)$
is a product sutured manifold, with sutures corresponding to $G\cap F$ \cite{Ga2}. 
Consider a product structure on $\mathcal{N}(F)\cong [-1,1]\times F$,
such that $G\cap \mathcal{N}(F)\cong [-1,1]\times (G\cap F)$. Take
infinitely many copies of $F$, so that we have $\{\pm 1/n, n\in \NN\} \times F\subset \mathcal{N}(F)$. 
Then we may ``spin'' $G$ about $F$ by adding $G-(G\cap F)$ to infinitely
many copies of $F$. We take $G-(G\cap F) \oplus ( \{\pm 1/n, n\in \NN\} \times F)$
to get an infinite leaf $L$ which spirals about $\{0\} \times F \subset \mathcal{N}(F)$ (see Figure \ref{spinning}). 
Then $(M\split F) \split L \cong I\times L$, since it is obtained by adjoining
to the product sutured manifold $M\split (G\cup F)$ infinitely many 
copies of a product sutured manifold of the form $I \times (F\split (G\cap F))  $
along annuli. Thus, $M\split F$ has a depth one taut oriented foliation. 
\end{proof}

 \begin{figure}[htb] 
	\begin{center}
	\psfrag{a}{$\{1\}\times F$}
	\psfrag{b}{$\{1/2\} \times F$}
	\psfrag{c}{$\{1/3\} \times F$}
        \psfrag{d}{$\{-1/3\} \times F$}
        	\psfrag{e}{$\{-1/2\} \times F$}
	\psfrag{f}{$\{-1\} \times F$}
	\psfrag{g}{$\vdots $}
	\psfrag{h}{$\vdots$}
	\psfrag{G}{$G$}
	\psfrag{F}{$F$}
	\psfrag{N}{$\mathcal{N}(F)$}
	\psfrag{L}{$L$}
	\epsfig{file=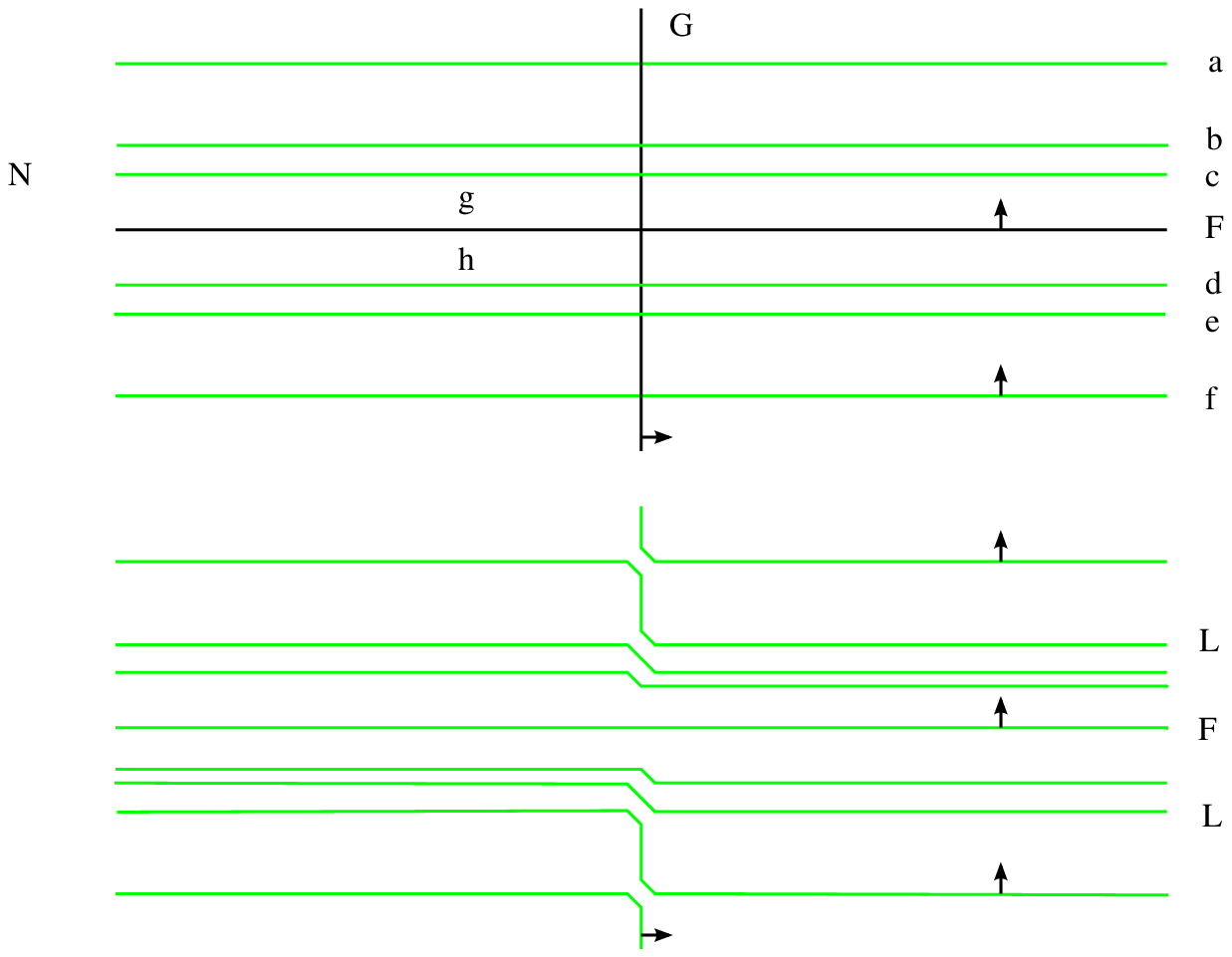, width=.75\textwidth, height=.75\textwidth}
	\caption{\label{spinning} Spinning $G$ about $F$ to obtain $L$ }
	\end{center}
\end{figure}

\section{Normal surfaces}

We'll follow the notation of \cite{TollefsonWang96}. 
Let $M$ be a compact 3-manifold with a triangulation $\mathcal{T}$ with $t$ tetrahedra.
A normal surface $F$ in $M$ is a properly embedded surface in general position with the
1-skeleton $\mathcal{T}^{(1)}$ and intersecting each tetrahedron $\Delta$ of $\mathcal{T}$ in properly embedded elementary
disks which intersect each face in at most one straight line. A normal isotopy of
$M$ is an isotopy which leaves the simplices of $\mathcal{T}$ invariant. If $E$ is an elementary disk then $\partial E$
is uniquely determined by the points $E\cap \mathcal{T}^{(1)}$, and a collection of normal disks in
$\Delta$ is uniquely determined up to normal isotopy by the number of points of
$E\cap\Delta^{(1)}$ in each edge of $\Delta^{(1)}$.
A normal surface $F$ is uniquely determined up to normal isotopy by the set of points $F\cap \mathcal{T}^{(1)}$. An elementary disk is determined, up to normal isotopy, by the manner in which it separates the vertices of $\Delta$ and we
refer to the normal isotopy class of an elementary disk as its disk type. There are seven
possible disk types in each tetrahedron corresponding to the seven possible
separations of its four vertices; four of which consist of triangles and three consisting of
quadrilaterals. The normal isotopy class of an arc in which an elementary disk meets
a triangle of $\Delta$ is called an arc type.

We fix an ordering $(d_1 , . . . , d_{7t})$ of the disk types in $\mathcal{T}$ and assign
a 7$t$-tuple ${\bf x} = (x_1, . . . , x_{7t})$, called the normal coordinates of $F$, to a normal surface $F$ by
letting $x_i$ denote the number of elementary disks in $F$ of type $d_i$. The normal surface $F$ is
uniquely determined, up to normal isotopy, by its normal coordinates. Among 7$t$-tuples of
nonnegative integers ${\bf x} = (x_1,. . . , x_{7t})$, those corresponding to normal surfaces are characterized by two constraints. The first constraint is that it must be possible to realize the
required 4-sided disk types $d_i$ corresponding to nonzero $x_i$Õs by disjoint elementary disks.
This is equivalent to allowing no more than one quadrilateral disk type to be represented in each
tetrahedron. The second constraint concerns the matching of the edges of elementary disks
along incident triangles of tetrahedra. Consider two tetrahedra meeting along a common
2-simplex and fix an arc type in this 2-simplex. There are exactly two disk types from each of the
tetrahedra whose elementary disks meet this 2-simplex in arcs of the given arc type. If the
7$t$-tuple is to correspond to a normal surface then there must be the same number of
elementary disks on both sides of the incident 2-simplex meeting it in arcs of the given type. This
constraint can be given as a system of matching equations, one equation for each arc type in
the 2-simplices of $\mathcal{T}$ interior to $M$.

The matching equations are given by
$$x_i + x_j = x_k + x_l,\  0\leq x_i,$$
where $x_i, x_j, x_k, x_l$ share the same arc type in a 2-simplex of $\mathcal{T}$.
The nonnegative solutions to the matching equations form an infinite linear cone
$\mathcal{S}_{\mathcal{T}} \subset \RR^{7t}$. The normalizing equation $\sum_{i=1}^{7t} x_i = 1$ is added to form the system of normal
equations for $\mathcal{T}$. The solution space 
$\mathcal{P}_{\mathcal{T}}\subset \mathcal{S}_{\mathcal{T}}$ becomes a compact, convex, linear cell and is
referred to as the projective solution space for $\mathcal{T}$. The projective class of a normal surface $F$ is
the image of the normal coordinates of $F$ under the projection $\mathcal{S}_{\mathcal{T}}\to\mathcal{P}_{\mathcal{T}}$. A rational point
${\bf z}\in \mathcal{P}_{\mathcal{T}}$ is said to be an admissible solution if corresponding to each tetrahedron there is at
most one of the quadrilateral variables which is nonzero. Every admissible solution is the
projective class of an embedded normal surface.

Given a normal surface $F$, we denote by $C_F$ the unique minimal face of $\mathcal{P}_{\mathcal{T}}$ that contains
the projective class of $F$. A normal surface is carried by $C_F$ if its projective class belongs to
$C_F$. It is easy to see that a normal surface $G$ is carried by $C_F$ if and only if there exists
a normal surface $H$ such that $mF = G + H$ for some positive integer $m$, where $+$ indicates addition
of normal surfaces, which is the unique way to perform cut and paste on $G\cap H$ to obtain another
normal surface when $G$ and $H$ have compatible quadrilateral types. Every rational point
in $C_F$ is an admissible solution, since it will have the same collection of quadrilateral types
as $F$. 

The weight of a normal surface $F$ is the number of points in
$F\cap \mathcal{T}^{(1)}$, which will be denoted by $wt(F)$ 
(this was introduced in \cite{Dunwoody85}, \cite{Oertel86}, and \cite{JacoRubinstein88} as a combinatorial
substitute for the notion of the area of a surface).
The type of a normal surface $F$ is the collection of types
of non-zero quadrilaterals and triangles in the surface $F$.   
Associated to a normal surface $F$ is a branched surface $\overline{B}_F$, obtained by identifying 
normally parallel disks in each tetrahedron, and neighborhoods of arcs in each
2-simplex, and neighborhoods of vertices in each edge, and then perturbing to make generic (see Figure \ref{normalbranched}). 
For a branched surface $B$ we will define $Guts(B)=M\split B$. If $F$
is a normal surface with associated branched surface $B$, then $Guts(B)$
consists of the non-normally product parts of $M\split F$, and gives a 
combinatorial substitute for $Guts(M\split F)$, which consists of the
non-product pieces of the JSJ decomposition.

A taut surface $F$ is said to be {\it lw-taut} (standing for {\it least weight-taut})
if it has minimal
weight among all taut surfaces representing the homology class
$[F]$. If $F$ is lw-taut, then $n$ pairwise disjoint copies of 
$F$, denoted by $nF$, is an lw-taut surface representing the
class $n[F]$. 
Let $\mathcal{B}_x(M) \subset H_2(M,\partial M;\RR)$ be the
unit ball of the Thurston norm on homology. Tollefson and 
Wang prove that associated to each non-trivial homology
class $f\in H_2(M,\partial M;\ZZ)$, there is a face $C_f$
of $\mathcal{P}_{\mathcal{T}}$, which is called a 
{\it complete lw-taut} face, which has the following properties
(\cite[Theorem 3.7]{TollefsonWang96}). Let $[C_f]$ denote
the set of homology classes of all oriented normal surfaces
carried by $C_f$. Then $C_f$ carries every lw-taut normal
surface representing any homology class in $[C_f]$. 
We extend the definition so that for $\alpha\in H^1(M)$, 
$C_{\alpha}=C_{PD(\alpha)}$. 

The construction of $C_f$ is given in the following way.
For a normal surface $F$, the unique minimal face $C_F$
of $\mathcal{P}_{\mathcal{T}}$ carrying the projective class
of $F$ is said to be {lw-taut} if every surface carried by 
$C_F$ is lw-taut. Tollefson and Wang prove that if 
$F$ is lw-taut, then $C_F$ is lw-taut \cite[Theorem 3.3]{TollefsonWang96}. 
Then for $f\in H_2(M,\partial M;\ZZ)$, one takes the finite set
$\{F_1, \ldots, F_n\}$ up to normal isotopy of all oriented,
lw-taut normal surfaces representing $f$, and let 
$F=F_1+\cdots +F_n$ be the lw-taut surface representing
the homology class $nf$, which is canonically associated
to $f$ up to normal isotopy. The addition here is both
addition of normal surfaces and oriented cut-and-paste
addition, which preserves the homology class. 
Then define $C_f\equiv C_F$, 
which will have the desired properties. One may also
associate the branched surface $\overline{B}_f\equiv \overline{B}_F$
canonically to the homology class $f$. 

If $F$ is a lw-taut surface, then there is canonically associated a normal branched
surface $B_F$, which is obtained from $\overline{B}_F$
by splitting along punctured disks of contact. Tollefson and
Wang prove that $B_F$ is a Reebless incompressible branched
surface which is oriented. If $F$ is the canonical
lw-taut normal surface associated to $f$, then $B_f\equiv B_F$
has the property that every surface carried by $C_F$ 
is isotopic to a surface carried by $B_f$, such that the
orientation on the surface is induced by the orientation of $B_f$.
This follows from the proof of \cite[Lemma 6.1]{TollefsonWang96},
where they prove that every surface carried by $C_F$ is
homologous to a surface carried by $B_f$, by cut-and-paste
of normal disks. However, in the case that $M$ is irreducible,
cut-and-paste of disks may be achieved by an isotopy. 
The branched surface $\overline{B}_F$ associated to $F$ has a fibered neighborhood $\overline{N}_F=\overline{N}_f$,
and $B_F$ has a fibered neighborhood $N_F=N_f$. 
Moreover, we may isotope $B_F$ to be normally carried
by $\overline{N}_F$ so that it is transverse to the fibers. 
In this case $Guts(B_f)=M\split N_f$ is a taut-sutured manifold,
with $\gamma(B_f)=\partial_v(N_F)$, and $R(\gamma)=\partial_h(N_F)$,
such that the orientation of each component of $R(\gamma)$ is induced
by the orientation of $B_f$. From the construction, suppose that
$g\in [C_f]$, then $C_g\subseteq C_f$, $[C_g]\subseteq [C_f]$,
and $\overline{B}_g \subseteq \overline{B}_f$ (up to normal
isotopy). 
For each component $Q$ of $Guts(\overline{B}_f)$, there is a complexity
$c(Q)$ which is the number of normal disk types $D$ 
such that $D$ is compatible with the disk types of $C_f$,
and $D$ cannot be normally isotoped into $\overline{N}_f$ transverse
to the fibers to miss
$Q$ (see Figure \ref{normalbranched}). 
Define 
$$c(Guts(\overline{B}_f) ) = \max \{ c(Q) | Q\  {\mbox{\rm is a component of }} Guts(\overline{B}_f)\}.$$
 
 \begin{figure}[htb] 
	\begin{center}
	\psfrag{0}{$0$}
	\psfrag{1}{$1$}
	\psfrag{2}{$4$}
	\epsfig{file=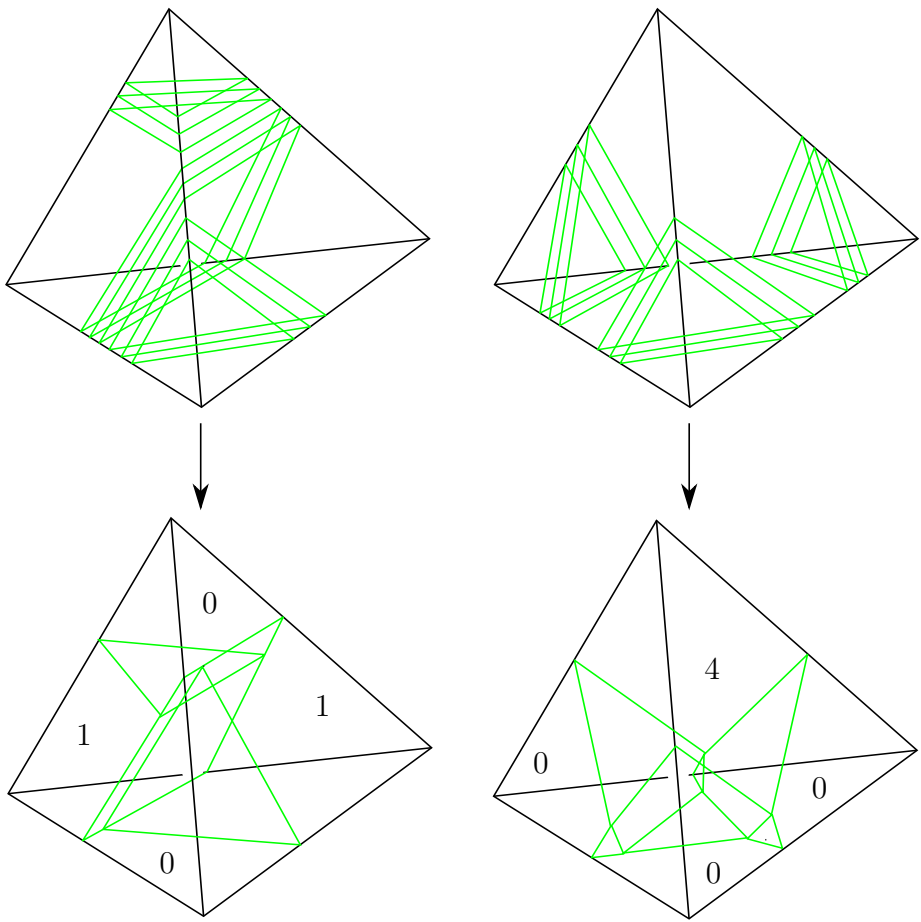, width=.75\textwidth, height=.75\textwidth}
	\caption{\label{normalbranched} Local contribution to complexity of the guts of a branched surface obtained by pinching normal disks}
	\end{center}
\end{figure}

\begin{lemma} \label{killing}
Let $M$ be a compact irreducible orientable 3-manifold. 
Suppose that $f \in H_2(M,\partial M)$ is a homology
class lying in the interior of a cone $E$ over a face of $\mathcal{B}_x(M)$. Moreover
assume that $F \in \interior(C_f)$, where $F$ is the canonical
projective representative of the homology class $f$ introduced above, and
$C_f$ is a maximal lw-taut face. Let $B_f$ be
the taut oriented normal branched surface associated 
to the cone $C_f$, and let $Guts(B_f)$ be the
complementary regions of $N_f$. 
Then $im\{ H^1(M)\to H^1(Guts(B_f))\} = 0$.  
\end{lemma}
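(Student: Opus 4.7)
The strategy is to dualize. The map $H^1(M) \to H^1(Guts(B_f))$ is zero if and only if the induced map $i_*\colon H_1(Guts(B_f); \QQ) \to H_1(M; \QQ)$ is zero, which is equivalent to having $PD(\alpha)\cdot \gamma = 0$ for every $\alpha \in H^1(M)$ and every loop $\gamma \subset Guts(B_f)$. Since $\gamma \subset M \setminus N_f$, any surface contained in the fibered neighborhood $N_f$ has zero algebraic intersection with $\gamma$. My plan is therefore to express $PD(\alpha)$ as a difference of two homology classes each represented by a surface inside $N_f$.

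Fix $\alpha \in H^1(M)$. My central claim is that for $N$ sufficiently large, both $N[F]$ and $PD(\alpha) + N[F]$ lie in $[C_f]$. Granting this, Tollefson--Wang's Theorem~3.7 guarantees that the canonical lw-taut normal representatives of these classes are carried by $C_f$, hence by $B_f$ up to isotopy, and thus contained in $N_f$ and disjoint from $\gamma$. Their algebraic intersections with $\gamma$ vanish, and by linearity
\[
PD(\alpha) \cdot \gamma \;=\; (PD(\alpha)+N[F])\cdot\gamma \;-\; N[F]\cdot\gamma \;=\; 0.
\]
Since this holds for every $\alpha$, the image of $i_*$ is torsion and the lemma follows. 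The first part of the claim is immediate: $N[F]$ is represented by the parallel copy $NF$, which lies in $C_f$ because $F$ does.

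The main obstacle is verifying that $PD(\alpha) + N[F] \in [C_f]$ for $N$ large enough, which reduces to showing that the image cone $[C_f] \subset H_2(M,\partial M;\RR)$ contains a full-dimensional neighborhood of the ray through $[F]$. All three hypotheses feed into this. Because $f \in \interior(E)$, small perturbations of $f$ in $H_2(M,\partial M;\RR)$ remain in the Thurston norm cone $E$ and so are still represented by lw-taut surfaces. Because $F \in \interior(C_f)$, the normal coordinates of $F$ have room to absorb a small perturbation and still lie on the face $C_f$. Because $C_f$ is maximal among lw-taut faces, a lw-taut normal representative $G$ of a class near $f$ cannot have its supporting face $C_G$ outside $C_f$: otherwise the normal sum $F+G$, which is lw-taut since $[F]$ and $[G]$ lie in the same top-dimensional Thurston cone, would support a face of $\mathcal{P}_{\mathcal{T}}$ strictly larger than $C_f$, contradicting maximality. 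Combining these facts, every class in a neighborhood of $[F]$ in $H_2(M,\partial M;\RR)$ is represented by a normal surface carried by $C_f$, so it lies in $[C_f]$, and rescaling yields $PD(\alpha) + N[F] \in [C_f]$ for $N$ sufficiently large.
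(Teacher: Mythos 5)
Your proof is correct and takes essentially the same approach as the paper's: both arguments hinge on the key claim that for large enough $N$, the perturbed class $PD(\alpha)+N[F]$ (written $kf+g$ in the paper) lies in $[C_f]$, so its lw-taut representative is carried by $B_f$ and hence disjoint from any loop in the guts, forcing the intersection number to vanish. The paper phrases this as a contradiction while you compute intersection numbers directly by linearity, but the mathematical content is identical; both proofs also gloss over the justification that the perturbed class actually lands in $[C_f]$ to roughly the same degree, with your maximality argument being a reasonable expansion of the paper's ``we may assume.''
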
 
\begin{proof}
Suppose that $im\{ H^1(M)\to H^1(Guts(B_f))\} \neq 0$. 
Let $g \in H_2(M,\partial M;\ZZ)$ be a homology class
dual to a cohomology class $PD(g)\in H^1(M)$ with
non-zero image in $H^1(Guts(B_f))$. Then for 
large enough $k$, we have $k f+g \in E$, since $f$ lies
in the interior of $E$, and we may assume that 
$kf+g \in [C_f]$.  But then any lw-taut surface
$S$ such that  $[S]=kf+g$ is isotopic to one carried
by $B_f$ by Lemma 6.1 \cite{TollefsonWang96}. 
This gives a contradiction, since for any surface $S$ carried
by $B_f$, we must have for any oriented loop $\alpha \subset Guts(B_f)$,
$\alpha \cap S =\emptyset$. But there is an oriented loop $\alpha \subset Guts(B_f)$
such that $PD(g)(\alpha)\neq 0$, or equivalently $g\cap [\alpha]\neq 0$.
But since $f\cap [\alpha]=0$, this implies that $[S]\cap [\alpha] = (nf+g)\cap [\alpha] =g\cap [\alpha]\neq 0$,
a contradiction. 
\end{proof}

\begin{cor} \label{killing2}
Under the same hypotheses as Lemma \ref{killing},
$im\{ H^1(M)\to H^1(Guts(\overline{B}_f))\} =0$.
\end{cor}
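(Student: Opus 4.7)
The plan is to deduce Corollary \ref{killing2} directly from Lemma \ref{killing} by comparing the two guts. The key observation, recorded in the excerpt just before Lemma \ref{killing}, is that $B_f$ may be isotoped so as to be normally carried by $\overline{N}_f$, transverse to its fibers. This means that after this isotopy, $B_f\subset \overline{N}_f$, and so a sufficiently thin fibered neighborhood $N_f$ of $B_f$ also sits inside $\overline{N}_f$. Consequently,
$$Guts(\overline{B}_f)\;=\;M\bbslash \overline{N}_f\;\subseteq\; M\bbslash N_f\;=\;Guts(B_f).$$

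With this containment in hand the corollary is immediate by functoriality. The inclusion $Guts(\overline{B}_f)\hookrightarrow M$ factors as $Guts(\overline{B}_f)\hookrightarrow Guts(B_f)\hookrightarrow M$, so the corresponding restriction map on first cohomology factors as
$$H^1(M)\;\longrightarrow\; H^1(Guts(B_f))\;\longrightarrow\; H^1(Guts(\overline{B}_f)).$$
By Lemma \ref{killing} the first arrow is zero, hence the composition is zero as well, which is precisely the statement that $im\{H^1(M)\to H^1(Guts(\overline{B}_f))\}=0$.

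There is no real obstacle beyond confirming the direction of the containment of fibered neighborhoods; once that is in place, no new geometric input or intersection-with-loop argument is required, and the corollary rests entirely on the work already done for Lemma \ref{killing}. I would therefore keep the proof to essentially these two short paragraphs, citing the preceding discussion (and Tollefson--Wang) for the claim that $B_f$ is normally carried by $\overline{N}_f$.
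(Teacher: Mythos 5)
Your proposal is correct and takes essentially the same route as the paper: you establish the containment $Guts(\overline{B}_f)\subset Guts(B_f)$, obtain the factorization $H^1(M)\to H^1(Guts(B_f))\to H^1(Guts(\overline{B}_f))$, and conclude from Lemma \ref{killing}. The only cosmetic difference is that you justify the containment by noting that $B_f$ is carried by $\overline{N}_f$ transverse to the fibers (so that a thin $N_f$ sits inside $\overline{N}_f$), whereas the paper cites the fact that $B_f$ is obtained from $\overline{B}_f$ by splitting along punctured disks of contact; these are two phrasings of the same fact from the preceding discussion.
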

\begin{proof}
The branched surface $B_f$ is obtained from $\overline{B}_f$
by splitting along punctured disks of contact. Thus, we
have an embedding $Guts(\overline{B}_f)\subset Guts(B_f)$,
which gives a factorization $H^1(M)\to H^1(Guts(B_f)) \to H^1(Guts(\overline{B}_f))$.
Then we apply Lemma \ref{killing} to see that the image is $0$. 
\end{proof}

\begin{lemma} \label{product}
Suppose that $f\in H_2(M,\partial M;\ZZ)$ is a homology 
class such that $(Guts(B_f),\gamma(B_f))$  is a product
sutured manifold. Then $f$ is a fibered homology class.
\end{lemma}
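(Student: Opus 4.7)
The plan is to show that $M \split F$ is a product sutured manifold (homeomorphic to $F \times I$), where $F$ is the canonical lw-taut normal surface representing $nf$ for some $n \geq 1$. Since $F$ is a fiber of a fibration $M \to S^1$ iff $M \split F$ is a product sutured manifold, this will establish that $f$ is a fibered class. Decompose $M = N_f \cup_{\partial_h N_f} Guts(B_f)$. The surface $F$ lies entirely in $N_f$ and is transverse to the $I$-fibers of $N_f$ (this being the defining property of being carried by $B_f$ after the normal isotopy arranging transversality to the fibers of $\overline{N}_F$).

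The key geometric step is to construct a non-vanishing vector field $X$ on $M$ that is transverse to $F$. On $N_f$ let $X$ be the positive $I$-fiber direction, where ``positive'' is specified by the transverse orientation of the oriented branched surface $B_f$. On $Guts(B_f) \cong R \times I$ let $X = \partial/\partial t$, with the product orientation chosen so that $R \times \{1\} = R_+(\gamma(B_f))$ and $R \times \{0\} = R_-(\gamma(B_f))$. Because the sutured structure on $Guts(B_f)$ is set up so that $R_\pm(\gamma(B_f)) = \partial_\pm N_f$ inherit their orientations from $B_f$, the two prescriptions for $X$ agree (after a small smoothing) along $\partial_h N_f$. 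The field $X$ is transverse to $F$ since $F \subset N_f$ is transverse to the $I$-fibers and $F \cap Guts(B_f) = \emptyset$. As $M$ is compact and $F$ is a transverse cross section to the flow of $X$, integrating the first-return map gives a fibration $M \to S^1$ with $F$ as a fiber (a disjoint union of isotopic fibers, corresponding to the multiplicity $n$), completing the argument.

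The main obstacle I anticipate is verifying the orientation compatibility at $\partial_h N_f$, i.e.\ that the $I$-direction of $R \times I = Guts(B_f)$ matches the positive $I$-fiber direction of $N_f$ under the identification $\partial_h N_f = R(\gamma(B_f))$. This is bookkeeping built into the Tollefson--Wang setup via the orientation of $B_f$ inducing coherent orientations on $R_\pm(\gamma(B_f))$. Granting this compatibility, one can also bypass vector fields entirely and directly observe that $M \split F$ is obtained by gluing two product sutured manifolds---the disjoint union of product $I$-bundles $N_f \split F$ (produced because each sector of $B_f$ of weight $w$ contributes $w+1$ product slabs when cut by $F$) and the product $R \times I = Guts(B_f)$---along rectangular subsurfaces of their horizontal boundaries in an orientation-preserving way, whence $M \split F$ itself is a product sutured manifold isomorphic to $F \times I$, and $f$ is fibered.
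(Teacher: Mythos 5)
Your closing ``bypass vector fields'' paragraph is essentially the paper's own proof: Tollefson--Wang associate to $B_f$ an $I$-bundle (their $\overline{L}_F$, obtained by splitting $\overline{N}_F$ along $2F$ away from $\partial_h \overline{N}_F$), and the paper glues its $I$-bundle structure to the product structure on $Guts(B_f)$ to get an $I$-bundle complementary to $F$, forcing $F$ to be a fiber. You reach the same conclusion, phrased as gluing $N_f \split F$ to $Guts(B_f)$ along horizontal boundary with matching $I$-directions, which is the same computation.

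The vector-field version in the first two paragraphs, however, has a real gap as written. Exhibiting a nonvanishing vector field $X$ transverse to $F$ does not by itself make $F$ a fiber; you must also show that $F$ is a \emph{global} cross-section, i.e.\ that every forward and backward orbit of $X$ meets $F$. You simply assert ``$F$ is a transverse cross section to the flow of $X$'' and then invoke the first-return map, but that is the conclusion, not a hypothesis. The assertion is in fact true here, but for reasons you never supply: one needs (i) that $F$ is \emph{fully} carried by $B_f$, so every $I$-fiber of $N_f$ meets $F$; (ii) that an orbit leaving $N_f$ through $\partial_h^+ N_f$ enters $Guts(B_f)\cong R\times I$ at $R_-$, crosses in bounded time, and re-enters $N_f$ at $\partial_h^- N_f$, i.e.\ at the bottom endpoint of an $I$-fiber; and (iii) that from that bottom endpoint the orbit, following the fiber upward, must hit the lowest sheet of $F$ on that fiber before reaching $\partial_h^+$. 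Without (i)--(iii) the first-return map is not defined on all of $F$ and the fibration does not follow. So either supply this argument, or simply lead with the $I$-bundle gluing, which sidesteps the issue and is what the paper does.
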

\begin{proof}
Let $F$ be an lw-taut surface fully carried by $B_f$ representing
$nf$ as above. Associated
to $B_f$ is an $I$-bundle $\overline{L}_F$ as described in Section
6 of \cite{TollefsonWang96}, which is obtained by splitting 
$\overline{N}_F$ along $2F-int_{2F}(\partial_h\overline{N}_F)$. 
If $(Guts(B_f),\gamma(B_f))$ is a product sutured manifold,
then the $I$-bundle structures on $\overline{L}_F\cup Guts(B_f)$
match together to give an $I$-bundle structure on the complement of $2F$,
which implies that $F$ is a fiber. 
\end{proof}

\section{Virtual fibering}
The following criterion for virtual fibering is the principal result in this paper. 

\begin{theorem} \label{RFRSvfiber}
Let $M$ be a connected orientable irreducible 3-manifold with $\chi(M)=0$ such that $\pi_1(M)$
is RFRS. If $0\neq f\in H^1(M)$ is a non-fibered homology class,
then there exists a finite-sheeted cover $p_{i,0}:M_i\to M$ 
such that  $p_{i,0}^{\ast} f \in H^1(M_i)$ lies in the cone over the boundary of a fibered
face of $\mathcal{B}_x(M_i)$.
\end{theorem}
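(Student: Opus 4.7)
The plan is to use the RFRS filtration of $\pi_1(M)$ to build a tower of finite covers and induct on a nonnegative integer complexity associated with the guts of the canonical lw-taut branched surface of the pulled-back class. Fix an RFRS filtration $G=G_0\rhd G_1\rhd G_2\rhd\cdots$ for $G=\pi_1(M)$ with $\bigcap_j G_j=\{1\}$, and let $M_i\to M$ be the cover corresponding to $G_i$. Set $f_i=p_{i,0}^*f$; let $E_i$ be the minimal face of $\mathcal{B}_x(M_i)$ whose cone contains $f_i$ in its relative interior, let $\overline{B}_i$ be the canonical lw-taut branched surface associated to the cone over $E_i$ in the sense of Tollefson--Wang, and set $Q_i=Guts(\overline{B}_i)$ with complexity $c_i=c(Q_i)\in\ZZ_{\geq 0}$.

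The induction terminates at the first $i$ for which $c_i=0$. At that stage every component of $Q_i$ is a product sutured manifold, so Lemma \ref{product} applied to an integer class in the relative interior of the cone over $E_i$ shows that $E_i$ is a fibered face. Since $f_i$ lies in the closed cone over $E_i$ (in its relative interior or on the boundary of the cone over a newly opened fibered face), $f_i$ lies in the cone over a fibered face, which yields the conclusion.

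To carry out the inductive step, I assume $c_i>0$ and produce an index $k-1>i$ with $c_{k-1}<c_i$. Choose a component $Q$ of $Q_i$ with $\pi_1(Q)\neq 1$; such a component exists because $c_i>0$ forces the guts to contain a non-product (hence non-ball) piece. Corollary \ref{killing2} at level $i$ gives $H^1(M_i)\to H^1(Q_i)=0$, so the image of $H_1(Q)\to H_1(M_i)/Torsion$ vanishes. Among nontrivial $q\in\pi_1(Q)$, choose one with minimal escape index in the tower: let $k$ be the minimum over $\pi_1(Q)\setminus\{1\}$ of the smallest $j$ with $q\notin G_j$ (the minimum exists since $\bigcap_j G_j=\{1\}$). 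By minimality, $\pi_1(Q)\subset G_{k-1}$, so $Q$ lifts to $\tilde Q\subset M_{k-1}$ on which $q$ represents a loop. The image of $q$ in $G_{k-1}/G_k$ is nontrivial; since $G_k\geq (G_{k-1})^{(1)}_r$, this quotient embeds into $H_1(M_{k-1})/Torsion$, producing $\alpha\in H^1(M_{k-1})$ with $\langle\alpha,q\rangle\neq 0$. Hence $\alpha$ restricts nontrivially to $\tilde Q$. But Corollary \ref{killing2} at level $k-1$ forces $H^1(M_{k-1})\to H^1(Q_{k-1})=0$, so $\tilde Q\not\subset Q_{k-1}$, and therefore $c_{k-1}<c_i$.

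The main obstacle I expect is the bookkeeping linking the covers: verifying that the canonical face $E_{k-1}\subset\mathcal{B}_x(M_{k-1})$ is large enough that the guts $Q_{k-1}$ really misses $\tilde Q$, and that losing a non-product component strictly lowers the complexity $c$. This relies on careful use of the Tollefson--Wang framework---the behavior of the projective solution space under covers and the compatibility of canonical lw-taut branched surfaces with pullback. Granted those technical points, $c_i$ strictly decreases infinitely often unless it first hits $0$, so the induction terminates in finitely many steps and the theorem follows.
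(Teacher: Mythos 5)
Your outline captures the broad strategy — a tower of RFRS covers, the canonical lw-taut branched surface and its guts, Corollary \ref{killing2} to force the guts to break, and induction on the complexity $c$ — but it skips the step that makes the machine run: perturbing the pulled-back class at each stage. You take $f_i = p_{i,0}^*f$ on the nose and let $E_i$ be the minimal face with $f_i$ in its relative interior. That face can easily be lower-dimensional, and then the hypotheses of Lemma~\ref{killing} (and hence Corollary~\ref{killing2}) simply do not hold. The proof of Lemma~\ref{killing} needs, for a class $g$ dual to something nonvanishing on the guts, that $kf + g$ eventually lies in the cone over $E$ and in $[C_f]$; this requires $f$ to sit in the interior of a top-dimensional face with $C_f$ a \emph{maximal} lw-taut face. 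That is exactly what the paper's inductive construction of $f_{i+1}$ (a small perturbation of $p_{i+1,i}^*f_i$ so that $C_{p_{i+1,i}^*f_i}$ becomes a facet of a maximal $C_{f_{i+1}}$, with $F_{i+1}\in\interior(C_{f_{i+1}})$) is for. Without perturbation, $\overline{B}_{f_{k-1}}$ is just $\overline{B}_{p_{k-1,i}^*f_i}$ and you have no mechanism producing new normal disk types inside the lift of $\overline{Q}$; the whole ``decompose the guts'' engine stalls. You flag this as ``bookkeeping'' at the end, but it is the central idea, not a technicality.

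Two smaller points. First, your termination criterion is off: $c_i = 0$ is not shown to imply every guts component is a product, and conversely $c_i$ need not reach $0$. The paper's argument is: if some component $Q$ has $\pi_1(Q)\neq 1$ then $c$ strictly decreases in some finite cover; since $c$ is a nonnegative integer this can happen only finitely often, so eventually every component is simply connected, hence a taut sutured ball, and Lemma~\ref{product} gives a fibered class. You instead assert the unproved converse that $c_i>0$ forces a non--simply-connected component. Second, going from ``$\tilde Q$ is not a component of $Q_{k-1}$'' to ``$c_{k-1}<c_i$'' requires the normal-disk-type counting argument (a new disk type of $C_{f_{k-1}}$ must appear inside every lift of $\overline{Q}$, using regularity of the cover), which is absent from your write-up. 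Your observation about choosing $k$ as the minimal escape index over $\pi_1(Q)\setminus\{1\}$ so that $\pi_1(Q)\subset G_{k-1}$ is a clean way to handle the lifting of a single component, though the paper still needs the maximum over components to handle them all at once.
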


\begin{proof} 
Let $G=\pi_1(M)$, with triangulation $\mathcal{T}$. We are assuming that  $G$ is RFRS, so there exists
$G=G_0 \rhd G_1 \rhd G_2 \rhd \cdots$ such that $G \rhd G_i$, $[G:G_i]<\infty$, and $G_{i+1}>(G_i)^{(1)}_r$ so that $G_i/G_{i+1}$
is abelian. Let $M_i$ be the regular cover of $M$ corresponding to the subgroup $G_i$, so that $M_0=M$. 
First, we give an outline of the argument. Choose a norm-minimizing
surface $\S_0$ in $M_0$ so that $[\S_0]$ lies in a maximal face of $\mathcal{B}_x(M_0)$, and whose projective
class is a small perturbation of $PD(f)$. Each component of the guts of $\S_0$ will lift to $M_1$ by Lemma \ref{killing}. We successively
lift each component of the guts of $\S_0$ to $M_i$ until a component
of the guts has a non-trivial image in $H^1(M_i)$. We
perturb the projective homology class of the preimage of $\S_0$
in $M_i$ again, to get a surface with simpler gut regions. 
The new gut regions will be obtained by sutured manifold
decomposition from the original gut regions of $\S_0$,
and in fact they embed back down into $M$.
The technical heart of the proof is the machinery to show that
these sutured manifold decompositions terminate. Gabai
showed that any particular taut sutured manifold hierarchy will terminate, but his complexity does not bound a priori the number of possible decompositions. Since
we are in some sense performing multiple sutured
manifold decompositions ``in parallel'' in each cover, 
it is not guaranteed that the family of decompositions
will terminate. 
Thus we use the machinery of normal surfaces. The
complexity measures roughly how many normal disks
types are still available in each gut region to decompose
along.

For $j>i$, let $p_{j,i}:M_j\to M_i$ be the covering map, and let $\mathcal{T}_i=p_{i,0}^{-1}(\mathcal{T})$ be the induced
triangulation of $M_i$. 
Let our given cohomology class $f\in H^1(M_0)$ be denoted by $f_0$. 
We construct by induction cohomology classes $f_i\in H^1(M_i)$, $i>0$
such that the canonical lw-taut surface $F_i$ such that $[F_i]=n_i PD(f_i)$ lies in the interior of $C_{f_i}$.
Given $f_i$, choose $f_{i+1}\in H^1(M_{i+1})$ to satisfy the
following properties:
\begin{itemize}
\item the projective class of $f_{i+1}$ is a small perturbation of $p_{i+1,i}^{\ast} f_i$ in the unit norm ball $\mathcal{B}_{x}(M_{i+1})$

\item
$C_{p_{i+1,i}^{\ast} f_i}\subset C_{f_{i+1}}$ is a facet

\item
$f_{i+1}$ lies in a maximal face of the unit norm 
ball $\mathcal{B}_x(M_{i+1})$ 

\item
there is a lw-taut surface representative $F_{i+1}$ of $n_{i+1} PD(f_{i+1})$ so that $F_{i+1}\in \interior( C_{f_{i+1}})$ and $C_{f_{i+1}}$ is a maximal face.
\end{itemize}
This concludes the inductive construction of the cohomology classes
$f_i$. We now prove that for $i$ large enough, $f_i$ is a fibered
cohomology class in $H^1(M_i)$.

To prove this, we show that if $f_i$ is not a fibered cohomology 
class, then there exists $J>i$ such that $c(Guts(\overline{B}_{f_J})) < c(Guts(\overline{B}_{f_i}))$.
By Lemma \ref{killing}  we have
$im\{H^1(M_i)\to H^1(Guts(B_{f_i}))\} =0$. 
Thus each component of $Guts(B_{f_i})$ 
lifts to $M_{i+1}$. 
Let $Q\subseteq Guts(B_{f_{i}})$ be a connected component such that 
$\pi_1(Q) \neq 1$. Recall that since $B_{f_i}$ is a Reebless incompressible
branched surface, $\pi_1 Q\hookrightarrow \pi_1 M_i$ is injective \cite{O2}. 
Let $\overline{Q}=Q\cap Guts(\overline{B}_{f_i})$,
where we are assuming that $B_{f_i} \subset \overline{N}_{f_i}$ is
carried transverse to the fibers. 
Since $\cap_j G_j=1$, there
exists $j=j(Q)> i$ such that $ G_{j+1}\cap \pi_1 (Q) \neq \pi_1(Q) \leq G_{j}$
(since $G_{j+1}\lhd G$, notice that we do not need to worry
about the conjugacy class of $\pi_1 Q < \pi_1 M= G$). Then there
exists $g\in \pi_1(\overline{Q})$ such that the projection of $g$ to the abelian group $G_{j}/G_{j+1}$
is non-trivial. Therefore $g$ represents a homologically non-trivial
element in $H_1(M_{j})$ of infinite order. If $F_i\subset M_i$ is an lw-taut
surface such that $[F_i]=n_i PD(f_i)$, then the preimage $p_{j,i}^{-1}(F_i) \subset M_{j}$ of $F_i$ representing the homology class $n _iPD(p_{j,i}^{\ast}f_i)\in H_2(M_{j},\partial M_{j})$ is also an lw-taut surface by \cite[Theorem 3.2]{To93}. We see that the preimage $p_{j,i}^{-1}(\overline{B}_{f_i}) = \overline{B}_{p_{j,i}^{\ast}f_i} \subset \overline{B}_{f_{j}} \subset M_{j}$. 

Thus, $Q$ will lift to $M_{j}$,
but $\overline{Q}$ will be decomposed by $\overline{B}_{f_j}$ into $\overline{Q}_j$ for which $c(\overline{Q}_j)< c(\overline{Q})$.
Otherwise $\overline{Q}$ is a component of $Guts(\overline{B}_{f_j})$, which would contradict Corollary \ref{killing2}, since $im\{ H^1(M_j)\to H^1(Q)\}\neq 0$. 
 Since $\overline{B}_{f_{j}}$
is obtained by identifying parallel normal disks, and $\overline{B}_{p_{j,i}^{\ast}f_{i}}\subset \overline{B}_{f_{j}}$,
there must be a normal disk type of $C_{f_{j}}$ which does not appear
as a normal disk type of $C_{p_{j,i}^{\ast}f_{i}}$, and thus ``kills'' the lift of $g$ from $M_i$ to $M_{j}$. In particular, this normal disk must lie inside of
the lift of $\overline{Q}$ to $M_{j}$, in such a way that it may not be normally isotoped to
lie inside of $\overline{N}_{p_{j,i}^{\ast}f_i}$. Since $M_{j}$ is a regular cover of $M_i$, this
implies that the same holds for every lift of $\overline{Q}$ to $M_{j}$. 
Now, we choose $J$ to be the maximum of $j(Q)$ over
all components $Q$ of $Guts(B_i)$. 
Thus, $c(Guts(\overline{B}_J))< c(Guts(\overline{B}_i))$. 

If $\pi_1(Q) =1$, then $Q$ must be a taut sutured ball. If every component
$Q$ of $Guts(B_{f_i})$ is a taut-sutured ball, then by Lemma \ref{product},
$f_i$ is a fibered cohomology class. By the induction above, we
see that there exists $I$ such that for all $i\geq I$, $c(Guts(\overline{B}_i))= c(Guts(\overline{B}_I))$. But this implies that for every
component $Q$ of $Guts(B_I)$, $\pi_1(Q)=1$, and therefore
$f_I$ is a fibered cohomology class. 
\end{proof}

The following theorem is a corollary of Theorem \ref{RFRSvfiber} and Corollary \ref{RFRS}. 
\begin{theorem} \label{vquadfiber}
If $\mathcal{O}$ is a 3-dimensional hyperbolic reflection orbifold of finite volume or
an arithmetic hyperbolic orbifold defined by a quadratic form, then there exists an orientable manifold
cover $\tilde{\mathcal{O}}\to\mathcal{O}$ such that $\tilde{\mathcal{O}}$ fibers over $S^1$. 
\end{theorem}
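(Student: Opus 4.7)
The plan is to reduce Theorem \ref{vquadfiber} to the two ingredients already in hand: Corollary \ref{RFRS}, which says that the relevant fundamental groups are virtually RFRS, and Theorem \ref{RFRSvfiber}, which upgrades a non-fibered class in an RFRS 3-manifold to a fibered class after a further finite cover.

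First I would pass from the orbifold $\mathcal{O}$ to a well-behaved manifold cover. By Selberg's lemma, $\pi_1^{\mathrm{orb}}(\mathcal{O})$ contains a torsion-free finite-index subgroup. Intersecting this with the orientation subgroup and with the finite-index RFRS subgroup provided by Corollary \ref{RFRS}, I obtain a finite-sheeted cover $M\to\mathcal{O}$ such that $M$ is a connected orientable hyperbolic 3-manifold of finite volume with $\pi_1(M)$ RFRS. Since $M$ is hyperbolic it is aspherical and hence irreducible, and $\chi(M)=0$ (both closed and finite-volume cusped hyperbolic 3-manifolds have vanishing Euler characteristic, the cusped case because each end cross-section is a torus). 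So $M$ satisfies all the hypotheses of Theorem \ref{RFRSvfiber}, except possibly the existence of a non-zero $f\in H^1(M)$.

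To produce such an $f$, I would invoke the observation following the definition of RFRS: a finitely generated RFRS group that is not virtually abelian has virtual infinite first Betti number. A lattice in $\Isom(\HH^3)$ is never virtually abelian, so $M$ virtually has $b_1>0$. Replacing $M$ by a further finite cover (still RFRS, still orientable, irreducible, $\chi=0$), I may pick some $0\neq f\in H^1(M;\ZZ)$. If $f$ is already a fibered class, $M$ itself fibers and we are done. Otherwise, Theorem \ref{RFRSvfiber} supplies a finite cover $\widetilde{M}\to M$ such that the pullback of $f$ lies in the cone over the boundary of some fibered face $E$ of $\mathcal{B}_x(\widetilde{M})$. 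In particular such a face $E$ exists, and by Thurston the open cone over $E$ consists of fibered classes; rationality of the Thurston norm forces integral classes in this open cone, each representing a fibration of $\widetilde{M}$ over $S^1$. Setting $\tilde{\mathcal{O}}=\widetilde{M}$ gives the desired orientable finite-sheeted cover of $\mathcal{O}$ that fibers over the circle.

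The substantive work is already packed into the two inputs, so there is no deep obstacle here; the only mildly delicate point is keeping track of the fact that RFRS passes to subgroups, so that the repeated refinement of finite covers (to achieve orientability, then positive $b_1$, then the cover of Theorem \ref{RFRSvfiber}) does not destroy any of the previously arranged properties.
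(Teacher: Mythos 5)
The proposal is correct and takes essentially the same approach as the paper: the paper records Theorem \ref{vquadfiber} as a direct corollary of Corollary \ref{RFRS} and Theorem \ref{RFRSvfiber} with no further argument, and your write-up supplies precisely the intended bookkeeping (Selberg's lemma plus the orientation subgroup to get a manifold, the virtual-positive-$b_1$ remark following the RFRS definition to produce a nonzero class, then Theorem \ref{RFRSvfiber} to land in a fibered face). The only small point worth flagging is that for a cusped $\mathcal{O}$ you should pass to the compact core with torus boundary before invoking Theorem \ref{RFRSvfiber}, since that theorem is implemented via finite triangulations; this is standard and does not affect the substance.
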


{\bf Remark:} Arithmetic groups defined by quadratic forms
include the Bianchi groups $\PSL(2, \mathcal{O}_d)$,
where $\mathcal{O}_d$ is the ring of integers in a quadratic
imaginary number field $\QQ(\sqrt{-d})$ where $d$ is a
positive integer, and the Seifert-Weber dodecahedral
space. These examples are some of the oldest known examples of hyperbolic lattices.

\section{Virtual depth one}
The next theorem gives an analogue of Theorem \ref{RFRSvfiber} for taut sutured manifolds. 
A taut sutured manifold $(M,\g)$ with $\chi(M)<0$ virtually fibers over an interval if and only if it is a product sutured manifold.
If $(M,\g)$ has a depth one taut-oriented foliation $\mathcal{F}$, then $\mathcal{F}_{|\interior M}$ gives
a fibering of $\interior M$ over $S^1$ if the corresponding cohomology class is rational. Thus,
depth one seems to be the best suited analogue to fibering for taut sutured manifolds with boundary.

\begin{theorem} \label{vd1}
Let $(M,\gamma)$ be a connected taut sutured manifold such that $\pi_1(M)$
satisfies the RFRS property. Then there is a finite-sheeted
cover $(\tilde{M},\tilde{\gamma}) \to (M, \gamma)$ such that $(\tilde{M},\tilde{\gamma})$
has a depth one taut oriented foliation. 
\end{theorem}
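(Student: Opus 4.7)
The plan is to parallel the proof of Theorem \ref{RFRSvfiber} with the cohomology class $\alpha \in H^1(M;\mathbb{Z})$ Poincar\'e dual to $[R_+(\gamma)] \in H_2(M,\partial M)$. Since $(M,\gamma)$ is taut, $R_+(\gamma)$ is an lw-taut representative of $\alpha$ with respect to a triangulation in which it is normal of least weight, so the Tollefson--Wang face $C_\alpha$, canonical branched surface $\overline{B}_\alpha$, and complexity $c(Guts(\overline{B}_\alpha))$ are all defined. The aim is to produce a finite cover $p:(\tilde M,\tilde\gamma)\to(M,\gamma)$ in which $p^{\ast}\alpha$ lies on the boundary of the cone over a fibered face of $\mathcal{B}_x(\tilde M)$; then Theorem \ref{vd1fiber} applied to the lw-taut representative $\tilde F = p^{-1}(R_+(\gamma))$ produces a depth one taut oriented foliation on $\tilde M\bbslash \tilde F$, which is $(\tilde M,\tilde\gamma)$ with its induced sutured structure since $\tilde F\subset\partial\tilde M$.

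To produce such a cover, I would run the inductive construction from the proof of Theorem \ref{RFRSvfiber} using the RFRS tower $\pi_1(M)=G_0\rhd G_1\rhd\cdots$: set $\alpha_0$ to be a small perturbation of $\alpha$ into the interior of a maximal face of $\mathcal{B}_x(M)$, and let $\alpha_{i+1}\in H^1(M_{i+1})$ be a small perturbation of $p_{i+1,i}^{\ast}\alpha_i$ lying in the interior of a maximal face of $\mathcal{B}_x(M_{i+1})$ containing $C_{p_{i+1,i}^{\ast}\alpha_i}$ as a facet. By Lemma \ref{killing} and Corollary \ref{killing2}, $im\{H^1(M_i)\to H^1(Guts(\overline{B}_{\alpha_i}))\}=0$ at each stage. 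For any gut component $Q$ with $\pi_1(Q)\ne 1$, the RFRS property produces $j>i$ and $g\in\pi_1(Q)$ with non-trivial image in $G_j/G_{j+1}$, hence in $H_1(M_j)$; exactly as in the proof of Theorem \ref{RFRSvfiber}, this forces a new normal disk type inside the lift of $Q$ that strictly decreases $c(Guts(\overline{B}_{\alpha_j}))$.

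Because the complexity is a non-negative integer, after finitely many steps it reaches zero at some level $I$: every component of $Guts(B_{\alpha_I})$ is then a taut sutured ball and $(Guts(B_{\alpha_I}),\gamma(B_{\alpha_I}))$ is a product sutured manifold. By Lemma \ref{product}, $\alpha_I$ is a fibered class in $H^1(M_I)$, and the successive perturbations arrange $p_{I,0}^{\ast}\alpha$ to lie on the boundary of the cone over the fibered face through $\alpha_I$. Applying Theorem \ref{vd1fiber} with $\tilde F = p_{I,0}^{-1}(R_+(\gamma))$---an lw-taut representative of $p_{I,0}^{\ast}\alpha$ by \cite[Theorem 3.2]{To93}---produces a depth one taut oriented foliation on $M_I\bbslash \tilde F = (M_I,\tilde\gamma)$, which is the cover claimed by the theorem.

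The main obstacle I expect is that the distinguished lw-taut surface $R_+(\gamma)$ sits in $\partial M$ rather than in the interior of $M$: one must verify that the normal surface and guts machinery of Section 4 continues to function with this boundary-carried surface, in particular that $\overline{B}_\alpha$ and its guts are well-defined and Reebless along $\partial M$, that the Lemma \ref{killing} obstruction argument goes through unchanged for the branched surface associated to $\alpha$, and that the spinning construction in the proof of Theorem \ref{vd1fiber} can be carried out with the ``outer'' surface $\tilde F$ lying in $\partial\tilde M$, producing a foliation tangent to $R_\pm(\tilde\gamma)$ and transverse to $A(\tilde\gamma)$. With these adaptations the argument becomes formally identical to the proof of Theorem \ref{RFRSvfiber}.
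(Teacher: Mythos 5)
Your high-level strategy --- run the RFRS complexity induction of Theorem \ref{RFRSvfiber} and finish with Theorem \ref{vd1fiber} --- is the right one, but the setup has a fatal flaw that is not a matter of checking details. You propose to work directly in $M$ with the class $\alpha = PD([R_+(\gamma)]) \in H^1(M;\ZZ)$. Since $R_+(\gamma) \subset \partial M$, the chain $R_+(\gamma)$ lies in $C_2(\partial M)$ and is therefore zero in $C_2(M,\partial M) = C_2(M)/C_2(\partial M)$; equivalently, every loop in $M$ can be pushed off $\partial M$ and thus has zero algebraic intersection number with $R_+(\gamma)$, so $\alpha = 0$. (Concretely, $R_+(\gamma)$ pushed into the interior bounds a collar, which is why its relative class vanishes.) There is therefore no nonzero lw-taut normal surface to start the induction with: $\overline{B}_\alpha$ is empty and the complexity machine never turns over. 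Your closing paragraph correctly senses that the boundary-carried surface is the obstacle, but it mischaracterizes the difficulty as ``verifying the machinery continues to function along $\partial M$,'' when in fact the machinery cannot even be invoked for a surface that is null-homologous because it sits in $\partial M$.

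The missing idea, which is the substantive new step in the paper's proof, is to double $M$ along $R(\gamma)$ to obtain $DM_\gamma$. In the double, $F = R_+(\gamma) \cup R_-(\gamma)$ (with coherently chosen orientations) is a properly embedded \emph{interior} taut surface with $[F] = 2[R_+] \neq 0$ in $H_2(DM_\gamma, \partial DM_\gamma)$ --- for example when $(M,\gamma) = F_0 \times I$ the double is $F_0 \times S^1$ and $[F]$ is twice the fiber class. Cutting $DM_\gamma$ along $F$ gives back two copies of $M$, so $Guts(B_F) = DM_\gamma \bbslash F$ consists of sutured manifolds whose fundamental groups inject into $\pi_1 M$ up to the doubling involution; the RFRS tower for $\pi_1 M$ is then pulled back along the retraction $r: DM_\gamma \to M$ to a tower of covers $N_i \to DM_\gamma$ on which the argument of Theorem \ref{RFRSvfiber} runs. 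Theorem \ref{vd1fiber} is applied to the double and the lift of $F$; the resulting depth one foliation is then read off on the pieces of $N_i \bbslash p^{-1}(F)$, each of which covers $M$. The paper also constructs a specific triangulation of $DM_\gamma$ (normalizing $R_\pm(\gamma)$ and stellar subdividing away from them) to guarantee that $R_\pm(\gamma)$ are the unique lw-taut representatives of their classes; your proposal asserts this but does not supply a reason, and it is not automatic.
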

\begin{proof}
The proof is similar to the proof of Theorem \ref{RFRSvfiber}.
We double $M$ along $R(\g)$ to get a manifold $DM_{\g}$ with
(possibly empty) torus boundary, which admits an orientation
reversing involution $\tau$ exchanging the two copies of $M$,
and fixing $R_{\pm}(\g)$ while reversing their coorientations. In
order that we may apply the lw-taut technology, we choose a triangulation
$\mathcal{T}$ of $DM_{\g}$  for which the only lw-taut surfaces representing
$[R_{\pm}(\g)]$ are isotopic to $R_{\pm}(\g)$. This is possible
by choosing a triangulation of $DM_{\g}$ for which $R_{\pm}(\g)$
are normal of the same weight and such that $R_{+}(\g)\cup R_{-}(\g)$
intersects each tetrahedron in at most one normal disk, and each tetrahedron
meeting $R_{\pm}(\g)$ does not intersect any other tetrahedron meeting $R_{\pm}(\g)$
except in the faces containing arcs of a normal disk of $R_{\pm}(\g)$.
In this case, if another normal surface is homologous to $R_{\pm}(\g)$, then it must
meet a tetrahedron which does not meet $R_{+}(\g)\cup R_{-}(\g)$, since the tetrahedra
meeting $R_{\pm}(\g)$ form a product neighborhood. 
Now, 
stellar subdivide the 
tetrahedra not meeting $R_{\pm}(\g)$ a number of times, until
every normal surface $F$ homologous to $R_{\pm}(\g)$ has weight $wt(F)>wt(R_{\pm}(\g))$. 
This is the triangulation that we work with on $DM_{\g}$ 
in order that we may apply the lw-taut machinery without
modification. 

The retraction $r:DM_{\g}\to M$ induces a retraction $r_{\#}:\pi_1 DM_{\g}\to \pi_1 M$. 
Choose orientations on the surfaces $R_{\pm}(\g)$
so that they are homologous, and call the resulting
lw-taut surface $F$. Then $[F]=2[R_{+}]$. 
Also, $B_F = \overline{B}_F=F$, and $Guts(B_F)=DM_{\g}\split F$.
Notice that for each component $Q$ of $Guts(B_F)$,
$\pi_1(Q)< \pi_1(M)$, up to the action of $\tau$. 
We proceed now as in the proof of Theorem \ref{RFRSvfiber}.
There are covers $M_i\to M$ satisfying the RFRS criterion.
We construct covers $N_i\to DM_{\g}$ so that 
$\pi_1(N_i)=r_{\#}^{-1}( \pi_1(M_i))$, that is induced by
the retraction $r:DM_{\g}\to M$. 
We may apply 
the proof of Theorem \ref{RFRSvfiber} to 
find a cover $p:N_i\to DM_{\g}$ and a cohomology class $f\in H^1(N_i)$ which is a perturbation of $p^{\ast}(PD([F]))$ such that
$f$ is a fibered class, by induction on $c(Guts(\overline{B}_{f_i}))$. We also have that each component
of $N\split p^{-1}(F)$ is a cover of $M$. By Theorem \ref{vd1fiber}, $N\split p^{-1}(F)$ has a depth one
taut oriented foliation. Thus, $M$ has a finite sheeted cover with a 
depth one taut oriented foliation. 
\end{proof}

The following theorem is a corollary of Corollary \ref{RFRS} and Theorem \ref{vd1}.
\begin{theorem}
Let $(M,\gamma)$ be a taut sutured compression body. Then there is a finite-sheeted
cover $(\tilde{M},\tilde{\gamma}) \to (M, \gamma)$ such that $(\tilde{M},\tilde{\gamma})$
has a depth one taut oriented foliation. 
\end{theorem}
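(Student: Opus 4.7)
The plan is to reduce this statement to Theorem \ref{vd1} by verifying that the fundamental group of a taut sutured compression body is virtually RFRS. Once that algebraic fact is in hand, the theorem follows immediately by passing to the finite-sheeted cover in which $\pi_1$ is RFRS and then invoking Theorem \ref{vd1}.

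First I would identify $\pi_1(M)$ explicitly. A compression body $M$ is built from $\partial_- M\times I$ by attaching 1-handles to $\partial_- M\times\{1\}$, with the convention that $M$ is a handlebody when $\partial_- M=\emptyset$. Applying Seifert-van Kampen to the associated graph of spaces, whose vertex groups are the fundamental groups of the components $F_1,\ldots,F_k$ of $\partial_- M$ and whose edge groups are trivial, yields
\[ \pi_1(M) \;\cong\; \pi_1(F_1)\ast\cdots\ast\pi_1(F_k)\ast F_r, \]
where $F_r$ is a free group of rank equal to the first Betti number of the dual graph of the 1-handle attachments (and the handlebody case gives $\pi_1(M)=F_g$).

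Next I would show $\pi_1(M)$ is virtually RFRS. By Corollary \ref{RFRS}, each surface group $\pi_1(F_i)$ contains a finite-index RFRS subgroup; passing to the core if necessary, choose finite-index normal $K_i\triangleleft\pi_1(F_i)$ with $K_i$ RFRS. Define $\phi:\pi_1(M)\to\prod_{i=1}^{k}\pi_1(F_i)/K_i$ using the universal property of free products, sending each surface factor to its quotient in the $i$-th coordinate and killing the free factor $F_r$. Let $K=\ker\phi$, a finite-index normal subgroup of $\pi_1(M)$. By the Kurosh subgroup theorem, $K$ decomposes as a free product of conjugates of the intersections $K\cap g\pi_1(F_i)g^{-1}$, each of which is a conjugate of $K_i$ (hence RFRS since the RFRS property is inherited by subgroups), together with a free group (itself RFRS, as $\ZZ$ is RFRS and free products of RFRS groups are RFRS by Corollary \ref{RFRS}). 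Thus $K$ is a free product of RFRS groups and therefore RFRS by Corollary \ref{RFRS}, so $\pi_1(M)$ is virtually RFRS.

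Finally, I would pass to the finite-sheeted cover $p:(\widehat M,\widehat\gamma)\to(M,\gamma)$ corresponding to $K$, with $\widehat\gamma=p^{-1}(\gamma)$ the induced sutured structure. This cover is again a taut sutured manifold: $\widehat M$ is irreducible since $M$ is, and $R(\widehat\gamma)$ is norm-minimizing because the Thurston norm is multiplicative under finite covers and an incompressible surface lifts to an incompressible surface. Since $\pi_1(\widehat M)=K$ is RFRS, Theorem \ref{vd1} applied to $(\widehat M,\widehat\gamma)$ yields a further finite-sheeted cover $(\tilde M,\tilde\gamma)\to(\widehat M,\widehat\gamma)$ carrying a depth one taut oriented foliation. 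The composed cover $(\tilde M,\tilde\gamma)\to(M,\gamma)$ is the one required. The only delicate step is the algebraic one: Corollary \ref{RFRS} explicitly handles free products of RFRS groups, but one must bootstrap to free products of virtually RFRS groups, which is precisely what the Kurosh argument above accomplishes, provided the $K_i$ are chosen normal so that $K$ contains every conjugate of every $K_i$ and decomposes cleanly.
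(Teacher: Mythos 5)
Your proof is correct and follows the same route the paper intends: the paper states this theorem as an immediate corollary of Corollary \ref{RFRS} and Theorem \ref{vd1}, which is exactly the reduction you carry out. The one place where the paper's one-line deduction glosses over a detail is the point you explicitly flag: Corollary \ref{RFRS} asserts that surface groups are \emph{virtually} RFRS and that free products of RFRS groups are RFRS, but what you actually need is that a free product of \emph{virtually} RFRS groups is virtually RFRS. Your Kurosh-subgroup argument (pass to normal finite-index RFRS subgroups $K_i$ of the surface factors, take the kernel $K$ of the induced map to $\prod_i \pi_1(F_i)/K_i$, observe that $K$ is a free product of conjugates of the $K_i$ together with a free group, and apply the free-product clause of Corollary \ref{RFRS}) closes this gap cleanly and is the natural way to do it. Your remark that the lifted sutured structure $(\widehat M,\widehat\gamma)$ remains taut (so that Theorem \ref{vd1} actually applies to the intermediate cover) is also a point worth making explicit, and your justification via Gabai's multiplicativity of the Thurston norm under finite covers is the standard one.
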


{\bf Remark:} The previous theorem is non-trivial, since there exist examples due to 
Brittenham of taut sutured genus 2 handlebodies which admit no depth one taut
oriented foliation \cite{Brittenham}.

\section{Fibered faces}
In this section, we prove a result of Long and
Reid \cite{LongReid07}, which says that a fibered arithmetic 3-manifold has finite index covers with arbitrarily many
fibered faces of the Thurston norm unit ball. 
The first result of this type was proved by Dunfield and Ramakrishnan for a specific arithmetic 3-manifold \cite{DR07}. 
It follows from Theorem \ref{vquadfiber} that this holds for any arithmetic 3-manifold defined by a
quadratic form (without the added assumption of fibering),
although this will also follow from Theorem \ref{RFRSfaces}. The proof is inspired by the strategy
of Long and Reid's argument, where we replace their use of pseudo-Anosov flows with harmonic
2-forms. 
\begin{theorem}
For any $k>0$, an arithmetic 3-manifold $M$ which fibers over $S^1$ has a finite-sheeted
cover $M_k\to M$ such that $\mathcal{B}_x(M_k)$ has at least $k$ distinct fibered faces. 
\end{theorem}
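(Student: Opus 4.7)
The plan is to follow the Long--Reid strategy, exploiting the infinite commensurator of an arithmetic lattice, but substituting harmonic 2-forms for the pseudo-Anosov flows they use to distinguish fibered faces. Fix a fibered class $\alpha \in H^1(M;\RR)$ lying in the interior of a fibered face $E \subset \mathcal{B}_x(M)$. Since $M$ is arithmetic, Margulis's commensurator theorem gives that $\Comm(\pi_1 M) < \Isom(\HH^3)$ is dense, so $\pi_1 M$ has infinite index in its commensurator. Each commensurator element $g \notin \pi_1 M$ determines a commensurability diagram: finite covers $N_g, N_g' \to M$ together with an isometry $\phi_g : N_g \to N_g'$.

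Given $k$, pick commensurator coset representatives $g_1,\dots,g_N$ for $N$ much larger than $k$, and take a common regular finite cover $\tilde M \to M$ factoring through every $N_{g_i}$ and $N_{g_i}'$. For each $i$, the composite obtained by pulling $\alpha$ back from $M$ to $N_{g_i}'$, transferring via $\phi_{g_i}^*$ to $N_{g_i}$, and pulling back to $\tilde M$, produces a class $\alpha_i \in H^1(\tilde M;\RR)$. Each $\alpha_i$ is fibered, because fiberedness is preserved under both covering pullbacks and isometries.

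To distinguish the fibered faces containing the $\alpha_i$, I replace each class by its Poincar\'e-dual harmonic 2-form $\omega_i$ on the finite-volume hyperbolic manifold $\tilde M$. Covering projections and isometries both commute with the Hodge Laplacian, so $\omega_i$ is obtained from a fixed harmonic form $\omega_0$ on the deepest cover via the commensurator-induced diffeomorphisms. Density of $\Comm(\pi_1 M)$ in $\Isom(\HH^3)$ forces the orbit $\{\omega_i\}$ to span arbitrarily high-dimensional subspaces of harmonic forms on large enough covers, while the Thurston unit ball $\mathcal{B}_x(\tilde M)$ has only finitely many top-dimensional faces and each fibered cone is open. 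A pigeonhole argument then yields at least $k$ distinct fibered faces once $N$ is sufficiently large, giving the desired cover $M_k := \tilde M$.

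The main obstacle is precisely this last step: guaranteeing that the commensurator orbit of $\alpha$ genuinely populates $k$ distinct faces, rather than concentrating in a few. This is the role harmonic 2-forms are designed to play, since they give canonical isometry-equivariant representatives whose orbit can be analyzed directly without reference to a choice of fibering. The heart of the argument reduces to showing that this orbit is not confined to any proper linear subspace of $H^2(\tilde M;\RR)$---equivalently, that the commensurator representation on the cohomology tower admits no nontrivial invariant subspace through $[\omega_0]$---which I expect to follow from the Zariski-density of $\Comm(\pi_1 M)$ in $\Isom(\HH^3)$ together with the standard transformation properties of harmonic forms on hyperbolic 3-manifolds.
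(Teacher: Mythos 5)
Your high-level strategy matches the paper's: exploit the infinite commensurator of an arithmetic lattice, replace Long--Reid's pseudo-Anosov flows with harmonic 2-forms, intersect conjugates to build $M_k$. But you have chosen the wrong harmonic form, and this choice is precisely what leaves you stuck at the end. You take $\omega_0$ to be the harmonic Poincar\'e dual of a \emph{fibered class} $\alpha$. A face of $\mathcal{B}_x$ contains infinitely many rational fibered classes, so knowing that the commensurator orbit produces many distinct harmonic forms dual to fibered classes does not directly yield many distinct fibered \emph{faces}: all those classes could, a priori, crowd into one face. You flag this gap yourself, and the Zariski-density/pigeonhole sketch you offer does not resolve it --- there is no mechanism forcing the orbit to spread across faces rather than across the interior of a single cone.

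The paper instead takes the harmonic representative $e_0$ of the \emph{Euler class} $e \in H^2(M)$ of the fibered face $E$, i.e.\ the class satisfying $e(\beta) = x(\beta)$ for $\beta$ in the cone over $E$. This is exactly the right object because the Euler class is the linear functional that \emph{defines} the supporting hyperplane of the face; two fibered faces with the same Euler class coincide. So once one knows that $g_{1*}\tilde e_0, \ldots, g_{k*}\tilde e_0$ are distinct in $\mathcal{H}^2(\HH^3)$ --- which is the content of \cite[Theorem 0.1]{Agol06}, applied here rather than rederived --- one simply sets $\Gamma_k = \cap_j g_j\Gamma g_j^{-1}$ and observes that the projections of these forms to $M_k = \HH^3/\Gamma_k$ are distinct harmonic 2-forms, each the (pullback of the) Euler class of a fibered face, hence the faces themselves are distinct. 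No pigeonhole, no density-in-$\Isom(\HH^3)$ argument on cohomology, no ``common cover through every $N_{g_i}$'' construction is needed. To repair your argument, replace the Poincar\'e dual of $\alpha$ by the Euler class of the face containing $\alpha$; distinctness of harmonic forms then translates immediately into distinctness of faces, and your remaining ``main obstacle'' disappears.
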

\begin{proof}
For simplicity, assume that $M$ is closed. Let $\alpha\in H_2(M)$ be a homology class which fibers. Then there is an Euler class
$e\in H^2(M)$ such that for any $\beta\in H_2(M)$ lying in the same face of $\mathcal{B}_x(M)$
as $\alpha$, one has $e(\beta)=x(\beta)$ \cite{Th3}. Consider a harmonic representative $e_0\in \mathcal{H}^2(M)$,
where $\mathcal{H}^2(M)\cong H^2(M;\RR)$ is the space of harmonic 2-forms on $M$ with respect to the hyperbolic metric. 
Let $\tilde{e}_0\in \mathcal{H}^2(\HH^3)$ be the image of $e_0$ in the universal cover $\HH^3\to \HH^3/\G= M$. By the
proof of \cite[Theorem 0.1]{Agol06}, the orbit of $\tilde{e}_0$ under $Comm(\G)$ is infinite.
We finish again as in the proof of \cite[Theorem 0.1]{Agol06}. For any $k$, choose $g_{1},\ldots, g_{k}\in Comm(\G)$
such that $\{ g_{j\ast}(\tilde{e}_0) | j=1,\ldots, k \}$ are distinct. Then  
let $\G_k= \cap_{j} (g_{j}\G g_{j}^{-1})$. The 2-forms $g_{j\ast} \tilde{e}_0$ project 
to distinct harmonic 2-forms on $M_k=\HH^3/\G_k$, each of which is dual to a distinct
fibered face of the Thurston norm unit ball which is the preimage of a fibered 
face on $\HH^3/g_j\G g_j^{-1}\cong M$ associated to the projection of $g_{j\ast} \tilde{e}_0$.
\end{proof}

Next, we prove that an irreducible 3-manifold $M$ with $\chi(M)=0$
and $\pi_1(M)$ RFRS has virtually infinitely many fibered faces. 

\begin{theorem} \label{RFRSfaces}
Let $M$ be a compact orientable irreducible 3-manifold with
$\chi(M)=0$ and $\pi_1(M)$ virtually RFRS and not virtually
abelian. Then for any $k>0$, there exists a cover $M_k\to M$
such that $\mathcal{B}_x(M_k)$ has at least $k$ distinct fibered faces. 
\end{theorem}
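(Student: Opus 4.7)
The plan is to pass high enough in the RFRS tower to find a cover $N$ whose Thurston norm ball has at least $k$ facets, pick a rational cohomology class in the relative interior of each, promote each class into the closure of a fibered face via Theorem~\ref{RFRSvfiber}, and then use the strict triangle inequality for the Thurston norm to show that these fibered faces remain distinct after pulling back to a common further cover.

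After replacing $M$ by a finite cover we may assume $\pi_1(M)$ is RFRS (subgroups of RFRS groups are RFRS). Since $\pi_1(M)$ is not virtually abelian, the observation recorded in Section~2 immediately after the definition of RFRS produces RFRS covers $N \to M$ with arbitrarily large first Betti number. Passing far enough in the tower, the Thurston norm on $N$ descends to a genuine norm on the quotient $H^1(N;\RR)/\ker(x)$ (whose kernel comes from classes dual to essential tori), whose unit ball is a full-dimensional rational polytope of dimension at least $k-1$ and hence has at least $k$ facets. Select rational cohomology classes $\beta_1,\ldots,\beta_k \in H^1(N;\QQ)$ whose images in this quotient lie in the relative interiors of $k$ distinct facets of $\mathcal{B}_x(N)$.

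For each $i$, if $\beta_i$ is already a fibered class, set $N_i := N$; otherwise Theorem~\ref{RFRSvfiber} produces a finite cover $p_i \colon N_i \to N$ with $p_i^*\beta_i$ lying in the cone over the boundary of some fibered face of $\mathcal{B}_x(N_i)$. Let $M_k \to N$ be the finite cover corresponding to $\bigcap_i \pi_1(N_i) \subset \pi_1(N)$, with covering map $q \colon M_k \to N$. Each pullback $q^*\beta_i$ then lies in the closure of some fibered face $\tilde{E}_i$ of $\mathcal{B}_x(M_k)$.

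It remains to show that $\tilde{E}_1,\ldots,\tilde{E}_k$ are distinct. The key fact is that two nonzero classes $u,v$ lie in the closure of a common facet of the Thurston norm ball if and only if the triangle inequality saturates, $x(u+v) = x(u)+x(v)$; otherwise the segment joining $u/x(u)$ to $v/x(v)$ enters the open unit ball. Since $\beta_i$ and $\beta_j$ $(i\neq j)$ lie in the relative interiors of distinct facets of $\mathcal{B}_x(N)$, no single facet closure contains both, so the strict inequality $x_N(\beta_i+\beta_j) < x_N(\beta_i)+x_N(\beta_j)$ holds. The Thurston norm scales linearly under finite covers by a theorem of Gabai, $x_{M_k}(q^*\alpha) = (\deg q)\,x_N(\alpha)$, so this strict inequality persists for $q^*\beta_i+q^*\beta_j$ in $\mathcal{B}_x(M_k)$, forcing $q^*\beta_i$ and $q^*\beta_j$ to lie in closures of distinct facets, and in particular $\tilde{E}_i \neq \tilde{E}_j$. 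The heart of the argument is this triangle-inequality characterization of facet closures; the only secondary technicality, ensuring that $N$ has enough facets despite possible degeneracy of the Thurston norm coming from essential tori, is handled by the unbounded growth of the effective first Betti number along the RFRS tower.
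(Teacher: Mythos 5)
Your proof is essentially correct and takes a genuinely different route from the paper's. The paper argues by induction on $k$: assuming a cover $M_k$ with $\geq k$ fibered faces, it passes to a cover $M'_k$ with more facets, locates a non-fibered facet $U$ (or is done if none exists), applies Theorem~\ref{RFRSvfiber} to an interior point of $U$, and then distinguishes the resulting new fibered face from the pullbacks of the old ones by comparing the Euler classes that support each face (showing $e' \neq p^*f$). You instead collect $k$ facets at once in a single cover $N$, promote each to a fibered class in separate covers $N_i$, pass to a common cover $M_k$, and distinguish the resulting fibered faces via the strict triangle inequality $x(\beta_i+\beta_j) < x(\beta_i)+x(\beta_j)$ together with Gabai's multiplicativity of the Thurston norm under finite covers. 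Your triangle-inequality characterization of "lying in a common facet" is correct, and the multiplicativity $x_{M_k}(q^*\alpha) = (\deg q)\, x_N(\alpha)$ is indeed a theorem of Gabai; this gives a clean, self-contained mechanism for preserving distinctness upstairs, arguably more elementary than tracking Euler classes. One point worth flagging: both your proof and the paper's lean on an assertion that passing up the RFRS tower produces norm balls with more facets. You phrase this as "the effective first Betti number grows unboundedly," which is more careful wording than the paper's bare claim that $\mathcal{B}_x(M'_k)$ "has more faces," since you at least acknowledge that the quotient $H^1/\ker(x)$ is what must grow; but neither version is actually justified. Growth of $b_1$ does not immediately yield growth of $b_1 - \dim\ker(x)$, since the degenerate directions (dual to essential tori and annuli) could in principle grow along with $b_1$. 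This is a real hole in both arguments as written, but it is a shared one, so it does not distinguish your approach from the paper's.
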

\begin{proof}
Since $\pi_1(M)$ is RFRS, we may assume that $M$ fibers by 
Theorem \ref{RFRSvfiber}. We will prove the result by induction. For $k>0$, assume $M$ has a
cover $M_k \to M$ such that $\mathcal{B}_x(M_k)$ has $\geq k$ fibered faces. Since $\pi_1(M)$
is RFRS and not virtually abelian, there exists a cover 
$M'_k\to M_k$ such that $\b_1(M'_k)>\b_1(M_k)$ and 
$\mathcal{B}_x(M'_k)\subset H^1(M'_k)$ has more faces
than $\mathcal{B}_x(M_k)\subset H^1(M_k)$. 
If $\mathcal{B}_x(M'_k)$ has all fibered faces, then $M'_k$
has $>k$ fibered faces since by hypothesis it has more
faces than $\mathcal{B}_x(M_k)$. Otherwise, $\mathcal{B}_x(M'_k)$
has a non-fibered face $U$, such that there is an Euler class
$e\in H^2(M'_k)$ so that $x(z)=e(PD(z))$ for $z\in U$. 
Fix $z\in \Int(U)$ and take $p:M''_k\to M'_k$ such
that $p^{\ast} z$ lies in the boundary of a fibered face $U'$
of $\mathcal{B}_x(M''_k)$ by Theorem \ref{RFRSvfiber}. Let $e'\in H^2(M''_k)$ be the
Euler class associated to $U'$. Then $e'(PD(p^{\ast} z))=p^{\ast}(e)(PD(p^{\ast} z))$. But then for any other fibered face $V$ 
of $\mathcal{B}_x(M'_k)$ with associated Euler class $f$,
we have $x(z)\neq f(PD(z))$. Thus, $x(PD(p^{\ast} z))\neq p^{\ast}(f)(PD(p^{\ast} z))$, and therefore $e' \neq p^{\ast} f$. So $U'$
must be a fibered face not containing $p^{\ast}(V)$, and
therefore $\mathcal{B}_x(M''_k)$ has at least $k+1$ fibered faces. 

Geometrically, what is happening here is that the polyhedron
$p^{\ast} \mathcal{B}_x(M'_k)$ is the intersection of the subspace
$p^{\ast} H^1(M'_k) \subset H^1(M''_k)$
with  $\mathcal{B}_x(M''_k)$. Each fibered face of $p^{\ast}\mathcal{B}_x(M'_k)$ will lie in the interior of a fibered face of $\mathcal{B}_x(M''_k)$, whereas the non-fibered face $ p^{\ast}U$ will
lie in a lower dimensional skeleton of $\mathcal{B}_x(M''_k)$ 
in the boundary of a fibered face.  The Euler classes used
above are linear functionals dual to each face, and
therefore are useful for distinguishing the faces of the unit
norm ball. 
\end{proof}

\section{Applications}
The following result was observed in \cite[Section 5]{KPV06} to
follow if arithmetic 3-manifolds defined by quadratic
forms are virtually fibered. 

\begin{theorem}
All arithmetic uniform lattices of simplest type ({\it i.e.} defined
by a quadratic form) with
$n\geq 4$ are
not coherent.
\end{theorem}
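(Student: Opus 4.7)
The plan is to combine Theorem~\ref{vquadfiber} with the reduction carried out by Kapovich, Potyagailo and Vinberg in \cite[Section~5]{KPV06}. Their argument establishes that once one knows that every three-dimensional arithmetic hyperbolic orbifold defined by a quadratic form virtually fibers, the noncoherence of arithmetic uniform lattices of simplest type in $\O(n,1)$ with $n\geq 4$ follows as a consequence; Theorem~\ref{vquadfiber} supplies precisely this missing hypothesis, so essentially no new work is needed here beyond invoking these two results.

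In more detail, the reasoning proceeds in three steps. First, given an arithmetic uniform lattice $\G<\O(n,1)$ of simplest type with $n\geq 4$, a standard restriction-of-scalars/rational-subspace argument produces, inside a finite-index subgroup of $\G$, a totally geodesic sublattice $\G_0$ that is itself an arithmetic uniform lattice of simplest type in $\O(3,1)$; concretely, $\G_0$ is the stabilizer of a rational $4$-dimensional subspace of signature $(3,1)$ on which the defining quadratic form remains anisotropic. Second, Theorem~\ref{vquadfiber} furnishes a further finite-index subgroup $\G_0'\leq \G_0$ whose associated closed hyperbolic $3$-manifold fibers over the circle, so that $\G_0'$ contains a normal closed surface subgroup $\pi_1(\S)$ coming from the fiber.

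The third step is the construction of \cite[Section~5]{KPV06}, which uses the presence of such a surface fiber subgroup sitting inside a totally geodesic $\O(3,1)$-sublattice of $\G$ to build a finitely generated subgroup of $\G$ that fails to be finitely presented, thereby exhibiting the noncoherence of $\G$. The technical heart of the argument lies entirely in this third step, and it is already carried out in \cite{KPV06}; the present contribution is only to discharge the virtual fibering hypothesis on which that construction depends, via Theorem~\ref{vquadfiber}. Consequently, the main obstacle here is not in our proof but in recognizing that the hypothesis of \cite[Section~5]{KPV06} is exactly what Theorem~\ref{vquadfiber} provides.
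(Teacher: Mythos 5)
Your proposal matches the paper exactly: the paper's entire ``proof'' is the single remark preceding the theorem statement, namely that this result was observed in \cite[Section~5]{KPV06} to follow once arithmetic $3$-orbifolds defined by quadratic forms are known to virtually fiber, and Theorem~\ref{vquadfiber} supplies that hypothesis. Your reconstruction of the internal mechanics of the Kapovich--Potyagailo--Vinberg reduction (totally geodesic $\O(3,1)$ sublattice, fiber surface subgroup, finitely generated but not finitely presented subgroup) is consistent with their argument and goes into more detail than the paper itself, but the logical structure you invoke is identical.
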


A group is {\it FD} if the finite representations are dense
in all unitary representations with respect to the Fell
topology. The following result follows
from \cite[Corollary 2.5, Lemma 2.6]{LubotzkyShalom04}.
This generalizes  \cite[Theorem 2.8 (1)]{LubotzkyShalom04},
which proves that $\SL(2,\ZZ[i])$ and $\SL(2,\ZZ[\sqrt{-3}])$
have property FD. The following theorem was pointed out
 by Alan Reid to be a  corollary of Theorem \ref{vquadfiber}. 

\begin{theorem} \label{FD}
Arithmetic Kleinian groups defined by quadratic forms and
reflection groups are FD.
\end{theorem}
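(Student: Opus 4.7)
The plan is to combine Theorem \ref{vquadfiber} with the abstract inheritance properties for FD established by Lubotzky and Shalom. Let $\Gamma$ be either an arithmetic Kleinian group defined by a quadratic form or a hyperbolic reflection group. By Theorem \ref{vquadfiber}, $\Gamma$ contains a finite-index torsion-free subgroup $\Gamma'$ such that $M=\HH^3/\Gamma'$ fibers over $S^1$. Hence $\Gamma'$ sits in a short exact sequence
\[
1 \to \pi_1(F) \to \Gamma' \to \ZZ \to 1,
\]
where $F$ is the fiber, a compact surface (possibly with boundary, in the cusped case).

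Next, I would invoke Corollary 2.5 of \cite{LubotzkyShalom04}, which shows that semidirect products of the form $\pi_1(F)\rtimes \ZZ$ have property FD. This is precisely the step that Lubotzky and Shalom used to deduce FD for $\SL(2,\ZZ[i])$ and $\SL(2,\ZZ[\sqrt{-3}])$ in their Theorem 2.8(1): once one knows the ambient 3-manifold group is fibered, FD is automatic. This covers both the closed case (where $\pi_1(F)$ is a closed surface group) and the cusped case (where $\pi_1(F)$ is a free group), since both classes of fiber groups are handled uniformly by Corollary 2.5. Thus $\Gamma'$ has FD.

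Finally, I would apply Lemma 2.6 of \cite{LubotzkyShalom04}, which states that property FD passes from a finite-index subgroup to the whole group. Applied to $\Gamma'<\Gamma$, this yields that $\Gamma$ is FD, completing the proof. There is essentially no technical obstacle left once Theorem \ref{vquadfiber} is in hand; the only point requiring a moment's care is confirming that the Lubotzky--Shalom lemmas apply to our setting without modification, and in particular that their formulation accommodates fibers with boundary — which it does, as the proof in \cite{LubotzkyShalom04} only uses that $\pi_1(F)$ is either a surface or a free group.
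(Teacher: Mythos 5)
Your proposal reproduces exactly the argument the paper intends: the paper gives no proof beyond the remark that the theorem ``follows from [Corollary 2.5, Lemma 2.6] of Lubotzky--Shalom'' together with Theorem \ref{vquadfiber}, and you have unpacked that citation in the natural way --- pass to a finite-index fibered cover (Theorem \ref{vquadfiber}), apply the Lubotzky--Shalom extension criterion to the resulting $\pi_1(F)\rtimes\ZZ$ to get FD for the cover, then promote FD up the finite-index inclusion. Your remark that the argument handles both the closed (surface-group fiber) and cusped (free-group fiber) cases is exactly the point of Lubotzky--Shalom's formulation, which their own Theorem 2.8(1) already exploits for the Bianchi groups $\SL(2,\ZZ[i])$ and $\SL(2,\ZZ[\sqrt{-3}])$. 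So this is the same proof, spelled out.
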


\section{Conclusion}
It seems natural to extend Thurston's conjecture to the class of aspherical
3-manifolds which have at least one hyperbolic piece in the geometric decomposition. 

\begin{conjecture} \label{vfiber}
If $M$ is an irreducible orientable compact 3-manifold with $\chi(M)=0$ which is not a graph manifold, then 
there is a finite-sheeted cover $\tilde{M}\to M$ such that $\tilde{M}$ fibers over $S^1$.
\end{conjecture}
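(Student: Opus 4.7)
The plan is to reduce Conjecture \ref{vfiber} to Theorem \ref{RFRSvfiber}: if one can show that $\pi_1(M)$ is virtually RFRS, then since $M$ is irreducible with $\chi(M)=0$, every nontrivial cohomology class in some finite cover moves, after a further cover, into the closure of a fibered face, and hence a further finite cover fibers. By Corollary \ref{RFRS} it suffices in turn to virtually embed $\pi_1(M)$ as a subgroup of a right-angled Coxeter or Artin group. Since $M$ is not a graph manifold, the JSJ decomposition contains at least one hyperbolic piece, so I would attack the conjecture by treating the hyperbolic and Seifert pieces separately and then gluing along the JSJ tori.

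For a hyperbolic JSJ piece $N$, the natural approach is to construct a proper cocompact action of $\pi_1(N)$ on a CAT(0) cube complex via Sageev's dual construction applied to a sufficiently rich collection of immersed quasi-Fuchsian surfaces. Once such a cube complex is produced, one would pass to a finite-sheeted cover on which the action is \emph{special} in the sense of \cite{HaglundWise07}, which would embed $\pi_1(N)$ virtually into a right-angled Artin group and hence, via the results quoted in Corollary \ref{RFRS}, into a right-angled Coxeter group. For each Seifert piece with toral boundary, $\pi_1$ is already virtually a direct product of a surface group with $\ZZ$, so virtual RFRS follows from the surface-group and direct-product cases of Corollary \ref{RFRS}.

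The remaining task is to assemble these virtual RFRS towers coherently across the JSJ tori. The strategy would be to exploit separability of the peripheral $\ZZ^2$ subgroups in each piece (automatic on the Seifert side, and a consequence of virtual specialness on the hyperbolic side via Haglund--Wise's canonical completion/retraction) to arrange, by passing to deeper covers, that the induced covers of adjacent pieces agree on their common boundary tori. A Bass--Serre tree argument modeled on the free-product case in Corollary \ref{RFRS}, but now with edge groups $\ZZ^2$ rather than trivial, should then produce an RFRS tower for $\pi_1(M)$ itself.

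The main obstacle is the first step: producing enough quasi-Fuchsian surfaces in a closed hyperbolic 3-manifold to make the Sageev dualization proper and cocompact, and then establishing virtual specialness of the resulting cube complex. Neither ingredient is known in general at the time of writing, although \cite{HaglundWise07} handles virtual specialness when sufficient separability is already in hand (as in the Coxeter and arithmetic-quadratic-form cases covered by Theorem \ref{vquadfiber}). A surface-subgroup abundance theorem of the sort conjectured by Cooper and Long, combined with a criterion that upgrades cubulated hyperbolic groups to virtually special ones, would together with the framework above complete the proof; I expect virtual specialness for hyperbolic groups to be strictly the harder of the two, since it is where the RFRS condition must actually be extracted from purely geometric data.
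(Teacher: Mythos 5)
This statement is labeled a \emph{conjecture} in the paper and is not proved there; the paper only sketches a strategy for it in the Conclusion section, so there is no internal proof to compare against. Your proposal is not a proof either, and to your credit you say so explicitly.

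That said, your strategy coincides almost exactly with the one the paper itself proposes: reduce to showing $\pi_1(M)$ is virtually RFRS by virtually embedding it in a right-angled Coxeter (or Artin) group, handle hyperbolic JSJ pieces by cubulating via Sageev's construction on a rich family of immersed $\pi_1$-injective surfaces, upgrade to virtual specialness in the sense of Haglund--Wise, handle Seifert pieces via the surface-group and direct-product cases of Corollary~\ref{RFRS}, and glue across JSJ tori using separability of the peripheral $\ZZ^2$'s together with a Bass--Serre argument generalizing the free-product case. The paper phrases the reduction slightly differently (``$\pi_1(M)$ is LERF and acts properly cocompactly on a CAT(0) cube complex,'' citing Haglund--Wise for the equivalence with virtual specialness), but this is the same plan. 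You have correctly and concretely identified the two genuine gaps: (1) producing enough quasi-Fuchsian surfaces in a closed hyperbolic 3-manifold to make the dual cube complex proper and cocompact, and (2) establishing virtual specialness of the resulting cubulation. Neither is established in this paper. For the historical record, (1) was later supplied by Kahn--Markovic's surface subgroup theorem and (2) by Wise's work on quasiconvex hierarchies combined with the resolution of the Virtual Haken and Virtual Fibering conjectures; but within the scope of this paper the conjecture remains open, and your write-up is an honest and accurate account of the state of the problem rather than a proof.
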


{\bf Remark: } There are closed graph manifolds which do not 
virtually fiber over $S^1$. The classic examples are Seifert fibered spaces for which the rational Euler class of the  Seifert fibration is non-zero and
the base orbifold has Euler characteristic non-zero \cite{OrlikRaymond69};
see also \cite[Theorem F]{Neumann97} for examples of irreducible graph
manifolds which do not virtually fiber. 
If an irreducible manifold with a non-trivial geometric decomposition fibers over $S^1$,
then each of the geometric factors of the torus decomposition will also fiber, so
it is certainly necessary for the above conjecture to be true that all of the geometric
pieces in the torus decomposition virtually fiber.
However, the obstructions to virtual fibering vanish when a graph
manifold has boundary or a metric of non-positive curvature \cite{BuyaloSvetlov04}. 
It would be interesting to know if a graph manifold $M$ which virtually fibers has
$\pi_1 M$ virtually RFRS.
 
As mentioned before, the following conjecture is a natural analogue of the virtual
fibering conjecture for taut sutured manifolds. 
\begin{conjecture}
Let $(M,\gamma)$ be a taut sutured manifold such that $\chi(M)<0$. Then there is a finite-sheeted
cover $(\tilde{M},\tilde{\gamma}) \to (M, \gamma)$ such that $(\tilde{M},\tilde{\gamma})$
has a depth one taut oriented foliation. 
\end{conjecture}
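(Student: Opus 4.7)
The plan is to reduce the conjecture to Theorem \ref{vd1} by establishing that the fundamental group of any taut sutured manifold $(M,\gamma)$ with $\chi(M)<0$ is virtually RFRS. Following the doubling construction used in the proof of Theorem \ref{vd1}, form $DM_\gamma$ by doubling $M$ along $R(\gamma)$; the retraction $r:DM_\gamma\to M$ exhibits $\pi_1(M)$ as a retract of $\pi_1(DM_\gamma)$, so since RFRS passes to subgroups it suffices to prove that $\pi_1(DM_\gamma)$ is virtually RFRS. Passing to the RFRS finite-index subgroup and applying Theorem \ref{vd1} would then produce the required finite cover with a depth one taut oriented foliation on the corresponding cover of $M$.

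To show $\pi_1(DM_\gamma)$ is virtually RFRS, I would work through the JSJ decomposition of $DM_\gamma$ piece by piece. The hypothesis $\chi(M)<0$ rules out the trivial cases and guarantees that at least one piece is either hyperbolic or an $I$-bundle over a hyperbolic surface. For the hyperbolic pieces (closed or cusped), the strategy is to produce a sufficient system of quasi-Fuchsian immersed surfaces, invoke the Sageev construction to cubulate the fundamental group, and then apply the Haglund--Wise machinery to show the action is virtually special; by Corollary \ref{RFRS} this yields virtual RFRS. For a Seifert fibered piece with non-empty boundary, the fundamental group is virtually a surface-group extension by $\ZZ$, and the surface-group case of Corollary \ref{RFRS} together with the direct product clause handles it.

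The critical obstacle is the combination step: even given virtual RFRS on each JSJ piece, one must arrange the finite-index subgroups defining the RFRS towers to glue compatibly across the JSJ tori. Concretely, at each stage of the tower one needs the abelian quotients of adjacent pieces to restrict to the same abelian quotient on the common boundary torus, a delicate commensurability constraint. Overcoming this likely requires a combination theorem in the spirit of Wise's quasi-convex hierarchy or the Przytycki--Wise treatment of mixed $3$-manifolds, but adapted to track the RFRS tower rather than merely virtual specialness. I expect this gluing problem, together with producing the required quasi-Fuchsian surface system inside each hyperbolic JSJ piece with controlled boundary behavior on the tori, to be the principal technical obstacle; once these are in hand, the reduction to Theorem \ref{vd1} is essentially formal.
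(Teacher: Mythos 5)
The statement you are proving is a \emph{conjecture} in the paper, not a theorem: the paper explicitly poses it as open (``a natural analogue of the virtual fibering conjecture for taut sutured manifolds'') and proves only the special cases where $\pi_1(M)$ is RFRS (Theorem \ref{vd1}) and where $(M,\gamma)$ is a taut sutured compression body. Your reduction --- establish that $\pi_1(M)$ is virtually RFRS and then invoke Theorem \ref{vd1} --- is the right shape, and it is the same reduction the paper carries out in the compression-body case, where Corollary \ref{RFRS} supplies virtual RFRS because compression body groups are free products of free groups and surface groups. But the heart of the conjecture is precisely the step your sketch leaves open: showing $\pi_1(M)$ is virtually RFRS for an arbitrary irreducible $3$-manifold with $\chi(M)<0$. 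Neither the cubulation of hyperbolic JSJ pieces (which needs a quasi-Fuchsian surface subgroup theorem), nor the virtually special conclusion, nor the combination step across JSJ tori is established in this paper or was available when it was written, and you correctly flag the last as the principal obstacle. What you have written is an honest roadmap with the gaps named, not a proof; those gaps were only closed later by Kahn--Markovic, Wise, Agol's virtual special theorem, and Przytycki--Wise for mixed manifolds, none of which is in, or reducible to, the present paper.

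A minor inefficiency worth noting: routing the virtual-RFRS reduction through $DM_\gamma$ is unnecessary. The doubling in the proof of Theorem \ref{vd1} is a device for setting up the least-weight taut normal surface machinery, not for diagnosing RFRS. Since $M$ is itself a compact irreducible $3$-manifold and RFRS passes to subgroups, you should argue directly about $\pi_1(M)$; passing to $\pi_1(DM_\gamma)$ via the retract only enlarges the manifold whose JSJ decomposition you then have to analyze, without gaining anything.
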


The following conjecture is a natural generalization of the virtual fibering
conjecture. Once a manifold $M$ fibers and $\b_1(M)>1$,
then $M$ will fiber over the circle in infinitely many different ways.
But the fibrations coming from a single face of the Thurston
norm are closely related to each other, so one
can ask how many unrelated fiberings are there? Clearly,
the following conjecture is stronger than Conjecture \ref{vfiber}.
\begin{conjecture} \label{fiberedfaces}
If $M$ is irreducible, compact orientable with $\chi(M)=0$ and not a graph manifold, then for
any $k>0$, there is a finite index cover  $M_k\to M$ with $k$
fibered faces of $\mathcal{B}_x(M_k)$. 
\end{conjecture}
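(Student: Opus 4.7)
The plan is to derive Conjecture \ref{fiberedfaces} directly from Theorem \ref{RFRSfaces}. That theorem already yields the conclusion whenever $\pi_1(M)$ is virtually RFRS and not virtually abelian, so the problem reduces entirely to verifying those two group-theoretic conditions for every irreducible orientable compact $3$-manifold $M$ with $\chi(M)=0$ that is not a graph manifold.

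The ``not virtually abelian'' part is immediate: since $M$ is not a graph manifold, its geometric decomposition contains at least one hyperbolic block $N$, and $\pi_1(N)\hookrightarrow \pi_1(M)$ contains a non-abelian free subgroup (take two loxodromic isometries with disjoint fixed-point pairs on $\partial \HH^3$). Hence no finite-index subgroup of $\pi_1(M)$ is abelian.

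The substantial step is virtual RFRS. By Corollary \ref{RFRS}, it suffices to produce a finite-index subgroup of $\pi_1(M)$ that embeds in a right-angled Artin or right-angled Coxeter group. I would attack this piece by piece. First, for each hyperbolic JSJ block $N$, I would cubulate $\pi_1(N)$: assemble a sufficiently rich family of immersed closed quasi-convex (quasi-Fuchsian, or boundary-parallel in the cusped case) surfaces and feed them into Sageev's dual cube complex construction to obtain a proper cocompact action of $\pi_1(N)$ on a CAT(0) cube complex $X_N$. Next, I would verify that $X_N/\pi_1(N)$ is virtually special in the sense of Haglund-Wise, which by \cite{HaglundWise07} virtually embeds $\pi_1(N)$ into a right-angled Artin group, hence renders $\pi_1(N)$ virtually RFRS. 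For each Seifert-fibered block, I would pass to a finite cover isomorphic to $F\times S^1$ with $F$ a compact surface with boundary; surface groups are RFRS by Corollary \ref{RFRS}, and the direct product with $\ZZ$ remains RFRS. Finally, one must pass to a finite regular cover of $M$ in which corresponding blocks glue compatibly along the JSJ tori, and assemble the individual RFRS structures into one global structure.

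The principal obstacle is the cubulation-plus-specialness step for the hyperbolic blocks: one needs to produce enough immersed quasi-convex surfaces to separate the hyperplanes in $X_N$ (so the action is proper and cocompact), and then establish separability of all quasi-convex subgroups of $\pi_1(N)$, which is precisely the Haglund-Wise criterion for virtual specialness. A secondary difficulty is the gluing across JSJ tori: RFRS is not obviously preserved under arbitrary graphs of groups, so one must exploit the fact that the edge groups are all $\ZZ^2$ and pass to deep enough covers in which the lifts of adjacent blocks and their boundary tori can be organized into a single compatible tower of cofinal normal finite-index subgroups satisfying the RFRS hypothesis.
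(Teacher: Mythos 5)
The statement you were asked to prove is labeled \textbf{Conjecture} \ref{fiberedfaces} in the paper, and the paper does not prove it; it is posed as an open strengthening of the virtual fibering conjecture. There is therefore no ``paper's own proof'' to compare against. What the paper does offer is Theorem \ref{RFRSfaces}, which establishes the conclusion under the additional hypothesis that $\pi_1(M)$ is virtually RFRS and not virtually abelian, together with a one-paragraph discussion in the Conclusion suggesting that one might attack Conjectures \ref{vfiber} and \ref{fiberedfaces} by showing that $\pi_1(M)$ virtually embeds in a right-angled Coxeter group --- equivalently, that $\pi_1(M)$ is LERF and acts properly cocompactly on a CAT(0) cube complex. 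Your reduction to Theorem \ref{RFRSfaces} and your observation that non--graph-manifolds have non--virtually-abelian fundamental group (via a free subgroup in a hyperbolic block) are both correct and match the paper's own framing.

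The genuine gap is that everything after that reduction is a research program, not a proof, and the unproved steps are exactly what made this a conjecture rather than a theorem at the time the paper was written. Specifically: (i) producing ``a sufficiently rich family of immersed closed quasi-convex surfaces'' in a closed hyperbolic block so that the Sageev construction yields a proper cocompact action is not available from anything in the paper --- for closed hyperbolic $3$-manifolds one does not even know that a single such surface exists without the Kahn--Markovic surface subgroup theorem, which postdates this paper; (ii) ``verify that $X_N/\pi_1(N)$ is virtually special'' is not a verification one can carry out from the paper's tools --- it is essentially Wise's program and the Virtually Special Theorem, again not available here; (iii) the final assembly across JSJ tori is asserted but not done, and RFRS (or virtual specialness) is not obviously closed under amalgamation over $\ZZ^2$; the combination theorems needed for mixed and graph pieces are a separate body of later work. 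None of these three steps can be filled in using Corollary \ref{RFRS}, Theorem \ref{RFRSfaces}, or anything else established in the paper, so the proposal does not constitute a proof of the conjecture. It is, however, an accurate sketch of the strategy the paper itself proposes in its Conclusion, which is the strategy that eventually succeeded.
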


{\bf Question:} Which 3-manifolds have RFRS or virtually RFRS fundamental group?
Clearly Seifert fibered spaces with non-zero Euler class of
fibration and non-zero Euler characteristic of base surface
cannot have any finite cover with RFRS fundamental group.
Boileau and Wang have examples derived from this which
cannot have RFRS fundamental group \cite{BoileauWang96}. Sol and Nil manifolds are virtually fibered, but not virtually
RFRS. 
Also, knot complements cannot have RFRS fundamental group,
but it is still possible that they are virtually RFRS. Every
hyperbolic 3-manifold has a finite sheeted cover which is
pro-$p$ \cite{Lubotzky88}, but in general a tower of $p$-covers will
not satisfy the rationality condition of the RFRS definition. 
One strategy to try to prove Conjectures \ref{vfiber} and \ref{fiberedfaces} would
be to show that an irreducible 3-manifold $M$ which is not
a graph manifold has $\pi_1(M)$ virtually a subgroup of a right-angled
Coxeter group, and therefore is virtually RFRS. As noted in \cite{HaglundWise07}, this
is equivalent to showing that $\pi_1(M)$ is LERF and that
$\pi_1(M)$ acts properly and cocompactly on a CAT(0)
cube complex. Having such an action would follow
from the existence of enough immersed $\pi_1$-injective
surfaces to bind every element of $\pi_1 M$. 

It would be interesting to try to compute the index of the
cover of a Bianchi group for which the group will virtually 
fiber. Similarly for all-ideal right-angled reflection groups,
for which we suspect there is a very small finite-index
cover which fibers, possibly a four-fold manifold cover. 

%\bibliographystyle{../hamsplain.bst}
%\bibliography{../refs}

\begin{thebibliography}{10}

\bibitem{ALR}
I.~Agol, D.~D. Long, and A.~W. Reid, \emph{The {B}ianchi groups are separable
  on geometrically finite subgroups}, Ann. of Math. (2) \textbf{153} (2001),
  no.~3, 599--621.

\bibitem{Agol06}
Ian Agol, \emph{Virtual betti numbers of symmetric spaces}, preprint, 2 pages,
  2006, \mbox{math.GT/0611828}.

\bibitem{AR:99}
I.~R. Aitchison and J.~H. Rubinstein, \emph{Polyhedral metrics and
  {$3$}-manifolds which are virtual bundles}, Bull. London Math. Soc.
  \textbf{31} (1999), no.~1, 90--96.

\bibitem{BoileauWang96}
Michel Boileau and Shicheng Wang, \emph{Non-zero degree maps and surface
  bundles over {$S\sp 1$}}, J. Differential Geom. \textbf{43} (1996), no.~4,
  789--806.

\bibitem{Bourbaki68}
N.~Bourbaki, \emph{\'{E}l\'ements de math\'ematique. {F}asc. {XXXIV}. {G}roupes
  et alg\`ebres de {L}ie. {C}hapitre {IV}: {G}roupes de {C}oxeter et syst\`emes
  de {T}its. {C}hapitre {V}: {G}roupes engendr\'es par des r\'eflexions.
  {C}hapitre {VI}: syst\`emes de racines}, Actualit\'es Scientifiques et
  Industrielles, No. 1337, Hermann, Paris, 1968.

\bibitem{Brittenham}
Mark Brittenham, \emph{Knots with unique minimal genus seifert surface and
  depth of knots}, preprint arXiv:math/0209138, 2002.

\bibitem{Button05}
J.~O. Button, \emph{Fibred and virtually fibred hyperbolic 3-manifolds in the
  censuses}, Experiment. Math. \textbf{14} (2005), no.~2, 231--255.

\bibitem{BuyaloSvetlov04}
S.~V. Buyalo and P.~V. Svetlov, \emph{Topological and geometric properties of
  graph manifolds}, Algebra i Analiz \textbf{16} (2004), no.~2, 3--68.

\bibitem{DR07}
Nathan~M. Dunfield and Dinakar Ramakrishnan, \emph{Increasing the number of fibered faces of arithmetic hyperbolic 3-manifolds}, Preprint 2007,  arXiv:0712.3243.


\bibitem{Dunwoody85}
M.~J. Dunwoody, \emph{An equivariant sphere theorem}, Bull. London Math. Soc.
  \textbf{17} (1985), no.~5, 437--448.

\bibitem{FO}
W.~Floyd and U.~Oertel, \emph{Incompressible surfaces via branched surfaces},
  Topology \textbf{23} (1984), no.~1, 117--125.

\bibitem{Ga2}
David Gabai, \emph{Foliations and the topology of $3$-manifolds}, J.
  Differential Geom. \textbf{18} (1983), no.~3, 445--503.

\bibitem{Gabai86}
\bysame, \emph{On {$3$}-manifolds finitely covered by surface bundles},
  Low-dimensional topology and Kleinian groups (Coventry/Durham, 1984), London
  Math. Soc. Lecture Note Ser., vol. 112, Cambridge Univ. Press, Cambridge,
  1986, pp.~145--155.

\bibitem{Ga4}
\bysame, \emph{Combinatorial volume preserving flows and taut foliations},
  Comment. Math. Helv. \textbf{75} (2000), no.~1, 109--124.

\bibitem{HaglundWise07}
Frederic Haglund and Daniel Wise, \emph{Special cube complexes}, Geom. Funct.
  Anal. (2007), 1--69.

\bibitem{Humphreys90}
James~E. Humphreys, \emph{Reflection groups and {C}oxeter groups}, Cambridge
  Studies in Advanced Mathematics, vol.~29, Cambridge University Press,
  Cambridge, 1990.

\bibitem{JacoRubinstein88}
William Jaco and J.~Hyam Rubinstein, \emph{P{L} minimal surfaces in
  {$3$}-manifolds}, J. Differential Geom. \textbf{27} (1988), no.~3, 493--524.

\bibitem{JS76}
William Jaco and Peter~B. Shalen, \emph{Peripheral structure of 3-manifolds},
  Invent. Math. \textbf{38} (1976), no.~1, 55--87.

\bibitem{Johannson79}
Klaus Johannson, \emph{Homotopy equivalences of {$3$}-manifolds with
  boundaries}, Lecture Notes in Mathematics, vol. 761, Springer, Berlin, 1979.

\bibitem{KPV06}
Michael Kapovich, Leonid Potyagailo, and Ernest~B. Vinberg, \emph{Non-coherence
  of some lattices in isom($\mathbb{H}^n$)}, preprint, 2006,
  \mbox{math.GR/0608415}.

\bibitem{Lackenby04}
Marc Lackenby, \emph{The asymptotic behaviour of {H}eegaard genus}, Math. Res.
  Lett. \textbf{11} (2004), no.~2-3, 139--149.

\bibitem{Leininger02}
Christopher~J. Leininger, \emph{Surgeries on one component of the {W}hitehead
  link are virtually fibered}, Topology \textbf{41} (2002), no.~2, 307--320.

\bibitem{LongReid07}
D.~D. Long and Alan~W. Reid, \emph{Finding fibre faces in finite covers}, Preprint 2007, to appear Math.
  Res. Lett., http://www.ma.utexas.edu/users/areid/fiberfaces5.pdf.

\bibitem{Lubotzky88}
Alexander Lubotzky, \emph{A group theoretic characterization of linear groups},
  J. Algebra \textbf{113} (1988), no.~1, 207--214.

\bibitem{LubotzkyShalom04}
Alexander Lubotzky and Yehuda Shalom, \emph{Finite representations in the
  unitary dual and {R}amanujan groups}, Discrete geometric analysis, Contemp.
  Math., vol. 347, Amer. Math. Soc., Providence, RI, 2004, pp.~173--189.

\bibitem{Neumann97}
Walter~D. Neumann, \emph{Commensurability and virtual fibration for graph
  manifolds}, Topology \textbf{36} (1997), no.~2, 355--378.

\bibitem{O2}
U.~Oertel, \emph{Incompressible branched surfaces}, Invent. Math. \textbf{76}
  (1984), no.~3, 385--410.

\bibitem{Oertel86}
Ulrich Oertel, \emph{Homology branched surfaces: {T}hurston's norm on {$H\sb
  2(M\sp 3)$}}, Low-dimensional topology and Kleinian groups (Coventry/Durham,
  1984), London Math. Soc. Lecture Note Ser., vol. 112, Cambridge Univ. Press,
  Cambridge, 1986, pp.~253--272.

\bibitem{OrlikRaymond69}
Peter Orlik and Frank Raymond, \emph{On {$3$}-manifolds with local {${\rm
  SO}(2)$} action}, Quart. J. Math. Oxford Ser. (2) \textbf{20} (1969),
  143--160.

\bibitem{Reid95}
Alan~W. Reid, \emph{A non-{H}aken hyperbolic {$3$}-manifold covered by a
  surface bundle}, Pacific J. Math. \textbf{167} (1995), no.~1, 163--182.

\bibitem{S}
Peter Scott, \emph{Subgroups of surface groups are almost geometric}, Journal
  of the London Mathematical Society. Second Series \textbf{17} (1978), no.~3,
  355--565.

\bibitem{Th:82}
William~P. Thurston, \emph{Three-dimensional manifolds, {K}leinian groups and
  hyperbolic geometry}, Bull. Amer. Math. Soc. (N.S.) \textbf{6} (1982), no.~3,
  357--381.

\bibitem{Th3}
\bysame, \emph{A norm for the homology of $3$-manifolds}, Mem. Amer. Math. Soc.
  \textbf{59} (1986), no.~339, i--vi and 99--130.

\bibitem{To93}
Jeffrey~L. Tollefson, \emph{Normal surfaces minimizing weight in a homology
  class}, Topology Appl. \textbf{50} (1993), no.~1, 63--71.

\bibitem{TollefsonWang96}
Jeffrey~L. Tollefson and Ningyi Wang, \emph{Taut normal surfaces}, Topology
  \textbf{35} (1996), no.~1, 55--75.

\bibitem{Walsh05}
Genevieve~S. Walsh, \emph{Great circle links and virtually fibered knots},
  Topology \textbf{44} (2005), no.~5, 947--958.

\end{thebibliography}
\def\cprime{$'$} \def\cprime{$'$} \def\cprime{$'$}
\providecommand{\bysame}{\leavevmode\hbox to3em{\hrulefill}\thinspace}
\providecommand{\href}[2]{#2}

\end{document}